%
%
%

\documentclass[12pt]{article}

\usepackage{amsmath} 
\usepackage{amssymb} 
\usepackage{theorem}

\topmargin-1cm
\textheight22cm
\oddsidemargin1cm
\textwidth16cm





\theoremstyle{change} 
\newtheorem{theorem}{Theorem}[section] 
\newtheorem{lemma}[theorem]{Lemma} 
\newtheorem{proposition}[theorem]{Proposition}
\newtheorem{corollary}[theorem]{Corollary}
\theorembodyfont{\rmfamily} 

\newtheorem{remark}[theorem]{Remark}
\newtheorem{example}[theorem]{Example}

\newtheorem{nothing}[theorem]{} 

\newtheorem{remark and notation}[theorem]{Remark and Notation}

\newenvironment{proof}{\noindent{\bf Proof}\ }{\qed\bigskip}



\renewcommand{\le}{\leqslant}


\newcommand{\br}{\mathrm{br}}
\newcommand{\btilde}{\tilde{b}}

\newcommand{\calO}{\mathcal{O}}

\newcommand{\catfont}{\mathsf}
\newcommand{\cbar}{\bar{c}}

\newcommand{\cdotH}{\mathop{\cdot}\limits_{H}}

\newcommand{\Con}{\mathrm{Con}}
\newcommand{\ctilde}{\tilde{c}}
\newcommand{\Def}{\mathrm{Def}}
\newcommand{\DefRes}{\mathrm{DefRes}}
\newcommand{\ebar}{\bar{e}}
\newcommand{\End}{\mathrm{End}}
\newcommand{\fbar}{\bar{f}}
\newcommand{\FF}{\mathbb{F}}
\newcommand{\gammatilde}{\tilde{\gamma}}
\newcommand{\Gal}{\mathrm{Gal}}

\newcommand{\hE}{\lexp{h}{E}}
\newcommand{\hJ}{\lexp{h}{J}}
\newcommand{\htilde}{\tilde{h}}
\newcommand{\Hom}{\mathrm{Hom}}

\newcommand{\Ind}{\mathrm{Ind}}
\newcommand{\Inf}{\mathrm{Inf}}
\newcommand{\Irr}{\mathrm{Irr}}
\newcommand{\Isom}{\mathrm{Iso}}
\newcommand{\kk}{\mathbb{\Bbbk}}
\newcommand{\KK}{\mathbb{K}}

\newcommand{\lexp}[2]{\setbox0=\hbox{$#2$} \setbox1=\vbox to
                 \ht0{}\,\box1^{#1}\!#2}
\newcommand{\lmod}[1]{\llap{\phantom{|}}_{#1}\catfont{mod}}
\newcommand{\lset}[1]{\llap{\phantom{|}}_{#1}\catfont{set}}
\newcommand{\Ltilde}{\tilde{L}}

\newcommand{\Mtilde}{\tilde{M}}

\newcommand{\myiso}{\buildrel\sim\over\to}

\newcommand{\nubar}{\bar{\nu}}
\newcommand{\omegabar}{\bar{\omega}}

\newcommand{\qed}{\nobreak\hfill
                   \vbox{\hrule\hbox{\vrule\hbox to 5pt
                   {\vbox to 8pt{\vfil}\hfil}\vrule}\hrule}}
\newcommand{\QQ}{\mathbb{Q}}

\newcommand{\Res}{\mathrm{Res}}
\newcommand{\rk}{\mathrm{rk}}

\newcommand{\tens}[3]{\mathop{\otimes}\limits^{{#1},{#2}}_{#3}}
\newcommand{\tenstop}[2]{\mathop{\otimes}\limits^{{#1},{#2}}}

\newcommand{\ZZ}{\mathbb{Z}}


\title{The Brou\'e invariant of a $p$-permutation equivalence\footnote{{\bf MR Subject Classification:}  
20C15, 20C20, 19A22. {\bf Keywords:}  Blocks of finite groups; perfect isometry; splendid Rickard equivalence; $p$-permutation equivalence; bisets.}}

\author{\small Robert Boltje\\
  \small Department of Mathematics\\
  \small University of California\\
  \small Santa Cruz, CA 95064\\
  \small U.S.A.\\
  \small boltje@ucsc.edu}
\date{\today}


\begin{document}
\sloppy


\maketitle


\begin{abstract}
A perfect isometry $I$ (introduced by Brou\'e) between two blocks $B$ and $C$ is a frequent phenomenon in the block theory of finite groups. It maps an irreducible character $\psi$ of $C$ to $\pm$ an irreducible character of $B$. Brou\'e proved that the ratio of the codegrees of $\psi$ and $I(\psi)$ is a rational number with $p$-value zero and that its class in $\FF_p$ is independent of $\psi$. We call this element the Brou\'e invariant of $I$. The goal of this paper is to show that if $I$ comes from a $p$-permutation equivalence or a splendid Rickard equivalence between $B$ and $C$ then, up to a sign, the Brou\'e invariant of $I$ is determined by local data of $B$ and $C$ and therefore, up to a sign, is independent of the $p$-permutation equivalence or splendid Rickard equivalence. Apart from results on $p$-permutation equivalences, our proof requires new results on extended tensor products and bisets that are also proved in this paper. As application of the theorem on the Brou\'e invariant we show that various refinements  of the Alperin-McKay Conjecture, introduced by Isaacs-Navarro, Navarro, and Turull are consequences of $p$-permutation equivalences or Rickard equivalences over a sufficiently large complete discrete valuation ring or over $\ZZ_p$, depending on the refinement.
\end{abstract}


\section{Introduction}\label{sec intro}

Throughout this introduction, we fix finite groups $G$ and $H$ and a complete discrete valuation ring $\calO$ containing a root of unity $\zeta$ of order $\exp(G\times H)$, the exponent of $G\times H$. We assume that the field of fractions $\KK$ of $\calO$ has characteristic $0$ and that its residue field $F=\calO/J(\calO)$ has prime characteristic $p$. We denote by $a\mapsto \bar{a}$ the canonical epimorphisms $\calO\to F$ and $\calO X\to FX$ for any finite group $X$.

Further we fix primitive central idempotents $b$ of $\calO G$ and $c$ of $\calO H$. We denote by $B:=\calO Gb$ and $C:=\calO Hc$ the corresponding block algebras.  By $R(\KK Gb,\KK Hc)$ we denote the Grothendieck group of $(\KK Gb,\KK Hc)$-bimodules and always view it via the usual category isomorphism $\lmod{\KK G}_{\KK H}\cong \lmod{\KK [G\times H]}$ as subgroup of the Grothendieck group $R(\KK[G\times H])$ of finitely generated $\KK[G\times H]$-modules.

\smallskip
In \cite{Broue1990}, Brou\'e introduced the notion of a {\em perfect isometry} between $B$ and $C$ as follows. An element $\mu\in R(\KK[G\times H])$ is called {\em perfect} if it satisfies the following two conditions:
\begin{itemize}
\item[(i)] For any $(g,h)\in G\times H$ one has $\mu(g,h)/|C_G(g)|\in\calO$ and $\mu(g,h)/|C_H(h)|\in\calO$.
\item[(ii)] If $(g,h)\in G\times H$ satisfies $\mu(g,h)\neq 0$ then $g$ is a $p'$-element if and only if $h$ is a $p'$-element.
\end{itemize}
Note that one has a group isomorphism $R(\KK Gb,\KK Hc)\myiso \Hom(R(\KK Hc), R(\KK Gb))$, $\mu\mapsto I_\mu$, induced by the functor $M\otimes_{\KK Hc}-\colon\lmod{\KK Hc}\to\lmod{\KK Gb}$ for any finitely generated $(\KK Gb,\KK Hc)$-bimodule $M$. 
A {\em perfect isometry} between $B$ and $C$ is an isomorphism $I=I_\mu\colon R(\KK Hc)\to R(\KK Gb)$ induced by a perfect element $\mu\in R(\KK Gb,\KK Hc)$, which respects the Schur inner products on $R(\KK G)$ and $R(\KK H)$. 
Thus, $I$ is a \lq bijection with signs\rq\ between $\Irr(\KK Hc)$ and $\Irr(\KK Gb)$: $I$ determines a bijection $\alpha\colon\Irr(\KK Hc)\myiso\Irr(\KK Gb)$ and signs $\varepsilon_\psi\in\{\pm1\}$, $\psi\in\Irr(\KK Hc)$, such that $I(\psi)=\varepsilon_\psi \alpha(\psi)$, for all $\psi\in\Irr(\KK Hc)$. If $I=I_\mu$ is a perfect isometry we also call $\mu$ a {\em perfect isometry}.

In \cite[Lemme~1.6]{Broue1990}, it is shown that if $\mu$ is a perfect isometry between $B$ and $C$ then the rational numbers
\begin{equation}\label{eqn Broue quotient}
 \frac{|G|/I_\mu(\psi)(1)}{|H|/\psi(1)}\,, \quad \psi\in\Irr(\KK Hc)\,,
\end{equation}
are units in the localization $\ZZ_{(p)}$ and their residue classes in $\ZZ_{(p)}/p\ZZ_{(p)}=\FF_p$ are equal, independent of $\psi$. We will denote this element in $\FF_p^\times$, which is uniquely determined by $\mu$, by $\beta(\mu)$ and will call it the {\em Brou\'e invariant} of $\mu$.

\medskip
By \cite[Proposition~1.2]{Broue1990}, an element $\mu\in R(\KK Gb,\KK Hc)$ is perfect if it belongs to the $\ZZ$-span of characters of indecomposable $p$-permutation $(B,C)$-bimodules $M$ (i.e., direct summands of permutation $\calO[G\times H]$-modules when viewed as $\calO[G\times H]$-module) which have a {\em twisted diagonal} vertices, i.e., vertices of the form $\Delta(P,\phi,Q):=\{(\phi(y),y)\mid y\in Q\}$, where $P\le G$ and $Q\le H$ are $p$-subgroups and $\phi\colon Q\myiso P$ is an isomorphism. We denote the free abelian group on the set of isomorphism classes $[M]$ of such indecomposable modules $M$ by $T^\Delta(B,C)$. In \cite{BP2020}, a {\em $p$-permutation equivalence} between $B$ and $C$ was defined to be an element $\gamma\in T^\Delta(B,C)$ with the property that $\gamma\cdot _H\gamma^\circ=[B]$, where $-^\circ\colon T^\Delta(B,C)\to T^\Delta(C,B)$ is induced by taking $\calO$-duals and $-\cdot_H-\colon T^\Delta(B,C)\times T^\Delta(C,B)\to T^\Delta(B,B)$ is induced by $-\otimes_{\calO H}-$. It follows that also $\gamma^\circ\cdot_G\gamma=[C]\in T^\Delta(C,C)$, see \cite[Theorem~12.3]{BP2020}. Using the canonical map 
\begin{equation*}
   \kappa\colon T^\Delta(B,C)\to R(\KK Gb,\KK Hc)
\end{equation*}
induced by $\KK\otimes_\calO -$, every $p$-permutation equivalence yields a perfect isometry $I_\mu$ with $\mu:=\kappa(\gamma)$, see \cite[Proposition~9.9]{BP2020}. We set $\beta(\gamma):=\beta(\kappa(\gamma))$ and call $\beta(\gamma)$ again the {\em Brou\'e invariant} of the $p$-permutation equivalence $\gamma$.  

\smallskip
If $\gamma\in T^\Delta(B,C)$ is a $p$-permutation equivalence  then, by Theorems~14.1 and 14.3 in \cite{BP2020}, there exists an indecomposable $p$-permutation $(B,C)$-bimodule $M$ with vertex of the form $\Delta(D,\phi,E)$, where $D$ is a defect group of $B$, $E$ is a defect group of $C$, and $\phi\colon E\myiso D$ is an isomorphism, and there exists a sign $\varepsilon\in\{\pm1\}$ such that
\begin{equation*}
   \gamma= \varepsilon\cdot[M] +\sum_{i=1}^r n_i\cdot [M_i]
\end{equation*}
with integers $n_1,\ldots,n_r$ and indecomposable $p$-permutation $(B,C)$-bimodules $M_1,\ldots, M_r$ with vertices that are properly contained in $\Delta(D,\phi,E)$. Thus, $M$ and $\varepsilon=\varepsilon(\gamma)$ are uniquely determined (up to isomorphism in the case of $M$) by $\gamma$. They are called the {\em maximal module} and the {\em sign} of $\gamma$.

\smallskip
In the main result of this paper we show that $\varepsilon(\gamma)\cdot\beta(\gamma)$ is determined by the \lq most local\rq\ data of $B$ and $C$, and is independent of $\gamma$. 
Let $(D,e)$ be a maximal $B$-Brauer pair. Thus, $D$ is a defect group of $B$ and $e$ is a block idempotent of $\calO C_G(D)$ with $\br_D(b)\ebar\neq 0$, where $\br_D\colon (\calO G)^D\to FC_G(D)$, $\sum_{g\in G}\alpha_g g\mapsto \sum_{g\in C_G(D)}\overline{\alpha_g} g$, is the {\em Brauer homomorphism}, a ring homomorphism, and where $(\calO G)^D$ denotes the set of $D$-fixed points of $\calO G$ under $D$-conjugation. The block algebra $FC_G(D)e$ has defect group $Z(D)$ (see \cite[Corollary~6.3.10]{Linckelmann2018}) and, up to isomorphism, it has a unique simple module $V$ (see \cite[Proposition~6.6.5]{Linckelmann2018}). 
We set
\begin{equation}\label{eqn b-invariant of B}
   b(B):=\frac{[C_G(D):Z(D)]}{\dim_F(V)}\,.
\end{equation}
Since any two maximal $B$-Brauer pairs are $G$-conjugate, the rational number $b(B)$ does not depend on the choices of $(D,e)$. Since the block algebra $\calO C_G(D)e$ has the central defect group $Z(D)$, its image under the natural epimorphism $\calO C_G(D)\to \calO [C_G(D)/Z(D)]$ is a block $B_*$ of defect zero (see \cite[Proposition~6.6.5]{Linckelmann2018}) whose unique irreducible character $\zeta$ satisfies $\zeta(1)=\dim_F(V)$ and also has defect zero.
Therefore, $b(B)$ is the {\em codegree} of the irreducible character $\zeta$ of $C_G(D)/Z(D)$ and an integer which is not divisible by $p$. We denote by 
\begin{equation*}
   \beta(B):=\overline{b(B)}\in\FF_p^\times
\end{equation*}
its residue class in $\ZZ/p\ZZ=\FF_p$.

\begin{theorem}\label{thm main}
Let $B=\calO Gb$ and $C=\calO Hc$ be block algebras as above and let $\gamma\in T^\Delta(B,C)$ be a $p$-permutation equivalence between $B$ and $C$. Then 
\begin{equation}\label{eqn main thm}
   \beta(\gamma)= \varepsilon(\gamma)\cdot\frac{\beta(B)}{\beta(C)}\,.
\end{equation}
In particular, up to a sign, $\beta(\gamma)$ is independent of $\gamma$.
\end{theorem}

\begin{remark}\label{rem Rickard}
(a) Let $X_\bullet$ be a {\em splendid Rickard equivalence} between $B$ and $C$. For our purposes this is (see \cite{Rickard1996}, where this concept was introduced first) a bounded chain complex $X_\bullet$ of finitely generated $p$-permutation $(B,C)$-bimodules, whose indecomposable direct summands have twisted diagonal vertices, such that 
\begin{equation*}
   X_\bullet\otimes_{\calO H} X_\bullet^\circ \simeq B\quad\text{and}\quad X_\bullet^\circ\otimes_{\calO G} X_\bullet\simeq C\,
\end{equation*}
where $X_\bullet^\circ$ denotes the $\calO$-dual chain complex of $X_\bullet$,
$\simeq$ means homotopy equivalence of chain complexes of $(B,B)$-bimodules (resp.~$(C,C)$-bimodules), and $B$ (resp.~$C$) denotes the chain complex with $B$ (resp.~$C$) placed in degree $0$ and being the only non-zero term. Note that our definition for the purpose of this paper is more general than the original one (see \cite{Rickard1996}) and also later definitions (see for instance \cite[Definition~9.7.5]{Linckelmann2018}). 
For $X_\bullet$ as above, the element
\begin{equation*}
   \gamma:=\sum_{n\in\ZZ}(-1)^n [X_n] \in T^\Delta(B,C)
\end{equation*}
is a $p$-permutation equivalence (see~\cite[Theorem~15.2]{BP2020}) and the element
\begin{equation*}
   \mu:=\kappa(\gamma) = \sum_{n\in\ZZ}(-1)^n \kappa([X_n]) \in R(\KK Gb,\KK Hc)
\end{equation*}
is a perfect isometry (see~\cite[Proposition~9.9]{BP2020}). This way one can define the Brou\'e invariant of $X_\bullet$ as $\beta(X_\bullet):=\beta(\gamma)=\beta(\mu)$ and the statement of Theorem~\ref{thm main} applies also to $\beta(X_\bullet)$.

\smallskip
(b) With the notation of Theorem~\ref{thm main} and the preceding paragraph, let $M_*$ be the unique indecomposable $(B_*,C_*)$-bimodule (up to isomorphism). Since $B_*$ and $C_*$ are block algebras of defect $0$, $M_*$ is a $p$-permutation bimodule, has twisted diagonal vertex, and induces a Morita equivalence between $B_*$ and $C_*$. The chain complex consisting of the only non-zero term $M_*$ in degree zero is therefore a splendid Rickard equivalence and $\gamma_*:=[M_*]\in T^\Delta(B_*,C_*)$ is a $p$-permutation equivalence between $B_*$ and $C_*$. Equation~(\ref{eqn main thm}) can now also be interpreted as $\beta(\gamma)=\epsilon(\gamma)\cdot\beta([M_*])$.
\end{remark}

Of particular interest is the situation where $C$ is the Brauer correspondent of $B$ with respect to Brauer's first main theorem.

\begin{corollary}\label{cor main}
Suppose that $D$ is a defect group of $B=\calO Gb$, that $H=N_G(D)$, and that the block idempotent $c$ of $\calO H$ is the Brauer correspondent of $b$ with respect to Brauer's first main theorem, i.e., $\bar{c}=\br_D(b)$. Then the Brou\'e invariant of any $p$-permutation equivalence and any splendid Rickard equivalence between $B$ and $C$ is equal to $1$ or $-1$. In particular, if there exists a $p$-permutation equivalence (resp.~splendid Rickard equivalence) between $B$ and $C$ then there also exists one with Brou\'e invariant equal to $1$.
\end{corollary}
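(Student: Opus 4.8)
The plan is to deduce the corollary from Theorem~\ref{thm main}: since that result gives $\beta(\gamma)=\varepsilon(\gamma)\cdot\beta(B,C)$ with $\varepsilon(\gamma)\in\{\pm1\}$, it suffices to prove that the rational number $b(B,C)$ in~(\ref{eqn fraction}) equals $1$ under the present hypotheses, and then to settle the \lq in particular\rq\ clause by a short duality/shift argument.

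First I would fix the local data. By Brauer's first main theorem, the block $c$ of $\calO H=\calO N_G(D)$ determined by $\bar c=\br_D(b)$ has defect group $D$, so I may take $E:=D$ as a defect group of $C$. Since $C_G(D)\le N_G(D)=H$, we have $C_H(D)=C_G(D)$, so the Brauer homomorphism for $C$ relative to $D$ is the map $\br_D\colon(\calO H)^D\to FC_G(D)$. Writing $c=\sum_{h\in H}\gamma_h h$ and using that $\bar c=\br_D(b)$ is supported on $C_G(D)$ (hence $\overline{\gamma_h}=0$ for $h\in H\setminus C_G(D)$), one obtains $\br_D(c)=\sum_{h\in C_G(D)}\overline{\gamma_h}\,h=\bar c=\br_D(b)$. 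Consequently, if $(D,e)$ is a maximal $B$-Brauer pair, so that $e$ is a block idempotent of $\calO C_G(D)$ with $\br_D(b)\bar e\neq 0$, then also $\br_D(c)\bar e\neq 0$, and $(E,f):=(D,e)$ is a maximal $C$-Brauer pair.

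With these choices $C_H(E)=C_G(D)$ and $Z(E)=Z(D)$, so the numerator and the denominator of~(\ref{eqn fraction}) involve the same quotient group $C_G(D)/Z(D)$; moreover $FC_H(E)f=FC_G(D)e$ as $F$-algebras, so their (unique) simple modules coincide and $\dim_F W=\dim_F V$. Hence $b(B,C)=1$, i.e.\ $\beta(B,C)=1$ in $\FF_p$, and Theorem~\ref{thm main} yields $\beta(\gamma)=\varepsilon(\gamma)\in\{1,-1\}$ for every $p$-permutation equivalence $\gamma$ between $B$ and $C$. For a splendid Rickard equivalence $X_\bullet$, Remark~\ref{rem Rickard}(b) identifies $\beta(X_\bullet)$ with $\beta(\gamma)$ for the associated $p$-permutation equivalence $\gamma=\sum_{n\in\ZZ}(-1)^n[X_n]$, so $\beta(X_\bullet)\in\{1,-1\}$ as well.

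For the final assertion I would observe that $-\gamma$ is again a $p$-permutation equivalence between $B$ and $C$: since $-^\circ$ and $\cdotH$ are additive, $(-\gamma)\cdotH(-\gamma)^\circ=\gamma\cdotH\gamma^\circ=[B]$. As $\kappa$ is additive, $\kappa(-\gamma)=-\kappa(\gamma)$, hence $I_{\kappa(-\gamma)}=-I_{\kappa(\gamma)}$, and then~(\ref{eqn Broue quotient}) gives $\beta(-\gamma)=-\beta(\gamma)$; since $\beta(\gamma)\in\{1,-1\}$, one of $\gamma$ and $-\gamma$ has Brou\'e invariant $1$. The splendid Rickard case is analogous: the complex $X_\bullet[1]$ shifted by one homological degree is again a splendid Rickard equivalence, and its alternating character sum equals $-\mu$, so $\beta(X_\bullet[1])=-\beta(X_\bullet)$. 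I do not anticipate a serious obstacle; the only step requiring care is the identification in the second paragraph of the \lq most local\rq\ data of $C$ with that of $B$ --- the equalities $C_H(D)=C_G(D)$, $\br_D(c)=\br_D(b)$, and the resulting equality of simple modules --- after which Theorem~\ref{thm main} does all the work.
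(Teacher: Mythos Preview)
Your proof is correct and follows essentially the same approach as the paper: choose $E=D$ and $f=e$ so that $C_H(E)=C_G(D)$, $Z(E)=Z(D)$, and $V\cong W$, whence $b(B,C)=1$ and Theorem~\ref{thm main} (together with Remark~\ref{rem Rickard}(b)) gives $\beta\in\{\pm1\}$; then replace $\gamma$ by $-\gamma$ or shift $X_\bullet$ by one degree for the last statement. Your write-up simply supplies more justification (the computation $\br_D(c)=\br_D(b)$ and the verification that $(D,e)$ is also a maximal $C$-Brauer pair) than the paper's terse proof.
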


\begin{proof}
In the situation of the corollary, one can choose $D=E$ so that $C_G(D)=C_H(E)$. Moreover, one can choose $f=e$ to obtain that $b(B)=b(C)$. The first statement follows now from Theorem~\ref{thm main}. The last statement follows from replacing $\gamma$ with $-\gamma$ or shifting $X_\bullet$ by one degree.
\end{proof}

Next we apply Corollary~\ref{cor main} in order to relate $p$-permutation equivalences and splendid Rickard equivalences to various refinements of the Alperin-McKay conjecture. Suppose that $b$, $D$, $H$, and $c$ are as in Corollary~\ref{cor main}. Let $\Irr_0(KGb)$ denote the set of characters of irreducible $KGb$-modules of height zero. Then the Alperin-McKay conjecture, see Conjecture~3 in \cite{Alperin1976}, states that
\begin{equation}\label{eqn AMcK}
   |\Irr_0(KGb)| = |\Irr_0(KHc)|\,.
\end{equation}
We first recall two refinements of this conjecture, introduced by Isaacs and Navarro in \cite{IN} and Navarro in \cite{Navarro2004}. For $\chi\in\Irr(KGb)$, set $r(\chi):=(|G|/\chi(1))_{p'}$, the $p'$-part of the {\em codegree} of $\chi$. For $r\in\{1,\ldots,p-1\}$, let $\Irr_0(KGb,r)$ denote the set of all $\chi\in\Irr_0(KGb)$ with $r(\chi)\equiv \pm r\mod p$. In the same way we define $\Irr_0(KHc,r)$. Conjecture~B in \cite{IN} states that
\begin{equation}\label{eqn INB}
   |\Irr_0(KGb,r)|=|\Irr_0(KHc,r)|
\end{equation}
for every $r\in\{1,\ldots,p-1\}$. Isaacs and Navarro also considered Galois actions on characters. Let $\QQ_p\subseteq L\subseteq K$ be an intermediate field and consider the set $\Irr_0(KGb, L)$ consisting of those $\chi\in\Irr_0(KGb,r)$ with $\QQ_p(\chi)=L$. Here, $\QQ_p(\chi):=\QQ_p(\chi(g)\mid g\in G)$. Similarly, define $\Irr_0(KHc,L)$. Clearly, $\Irr_0(KGb,L)$ and $\Irr_0(KHc,L)$ are empty unless $L\subseteq \QQ(\zeta)$, where $\zeta\in K$ has order $\exp(G)$. A slightly stronger version of Conjecture~B in \cite{Navarro2004} states that
\begin{equation}\label{eqn NB}
   |\Irr_0(KGb,L)|=|\Irr_0(KHc,L)|
\end{equation}
for every intermediate field $\QQ_p\subseteq L\subseteq\QQ_p(\zeta)$. Combining the refined Alperin-McKay conjectures in (\ref{eqn INB}) and (\ref{eqn NB}), let $\Irr_0(\KK Gb,r,L)$ be the set of all $\chi\in\Irr_0(\KK Gb)$ with $r(\chi)\equiv \pm r\mod p$ and $\QQ_p(\chi)=L$. Then one can consider the conjecture
\begin{equation}\label{eqn IND}
   |\Irr_0(KGb,r,L)|=|\Irr_0(KHc,r,L)|
\end{equation}
for every $r\in\{1,\ldots,p-1\}$ and every intermediate field $\QQ_p\subseteq L\subseteq \QQ_p(\zeta)$. 
Finally, Turull, see \cite{Turull2013}, suggested a further refinement involving endomorphism rings of simple modules. For $\chi\in\Irr(KGb)$, let $h(\chi)\in\QQ/\ZZ$ be defined as follows: Let $V$ be the unique irreducible $\QQ_p G$-module with the property that $\chi$ is a constituent of the character of $V$ and let $D:=\End_{\QQ_p G}(V)$, a central $\QQ_p(\chi)$-division algebra. Then let $h(\chi)\in\QQ/\ZZ$ be the Hasse-invariant of $D$, see \cite[Section~14]{Reiner1975} for a definition. 
For $h\in\QQ/\ZZ$, define $\Irr_0(KGb,r,L,h)$ as the set of those $\chi\in\Irr_0(KGb,r,L)$ with $h(\chi)=h$. Turull's conjecture then states that
\begin{equation}
\label{eqn T}
   |\Irr_0(KGb,r,L,h)|=|\Irr_0(KHc,r,L,h)|
\end{equation}
for all $r\in\{1,\ldots, p-1\}$, $\QQ_p\subseteq L\subseteq \QQ(\zeta)$, and $h\in\QQ/\ZZ$. Note that $h(\chi)$ is represented by an element in $\{0,1/p,2/p,\ldots,(p-1)/p\}$ if $p$ is odd, and by $0$ or $1/2$ if $p=2$, for any $\chi\in\Irr(KG)$, see \cite{Yamada1974a} and \cite{Yamada1974b}. 

\medskip
Corollary~\ref{cor main} allows us now to make a connection between $p$-permutation equivalences and splendid Rickard equivalences and the refinements (\ref{eqn INB}), (\ref{eqn IND}), (\ref{eqn T}) of the Alperin-McKay conjecture. Note that the definitions of $p$-permutation equivalences and splendid Rickard equivalences extend in the obvious way to arbitrary complete discrete valuation rings in place of $\calO$, in particular to $\ZZ_p$, as used in the statement of the next theorem. They also extend to sums of blocks over arbitrary complete discrete valuation rings, as used in the proof of the next theorem.

\begin{theorem}\label{thm app}
Let $b$ be a block idempotent of $\calO G$ and let $\btilde$ be the unique block idempotent of $\ZZ_p G$ with $b\btilde\neq 0$. Let $D$ be a defect group of $b$ and set $H:=N_G(D)$. Further, let $c$ be the block idempotent of $\calO H$ which corresponds to $b$ under Brauer's First Main Theorem, i.e., $\br_D(b)=\cbar$, and let $\ctilde$ be the unique block idempotent of $\ZZ_p H$ with $c\ctilde\neq 0$. Finally, let $\zeta\in K$ be a root of unity of order $\exp(G)$ and set $\Gamma:=\Gal(\QQ_p(\zeta)/\QQ_p)$.

\smallskip
{\rm (a)} If there exists a $p$-permutation equivalence between $\calO Gb$ and $\calO Hc$ then (\ref{eqn INB}) holds. In other words, there exists a bijection
\begin{equation*}
   \alpha\colon \Irr_0(KGb)\myiso \Irr_0(KHc)
\end{equation*}
such that $r(\chi)\equiv \pm r(\alpha(\chi))\!\mod p$, for all $\chi\in\Irr_0(KGb)$.

\smallskip
{\rm (b)} If there exists a $p$-permutation equivalence between $\ZZ_p G\btilde$ and $\ZZ_p H\ctilde$ then (\ref{eqn IND}) holds. 
Moreover, the $\Gamma$-stabilizers $\Gamma_b$ and $\Gamma_c$ of $b$ and $c$, respectively, coincide and there exists a $\Gamma_b$-invariant bijection
\begin{equation*}
   \alpha\colon \Irr_0(KGb)\myiso \Irr_0(KHc)
\end{equation*}
such that $r(\chi)\equiv \pm r(\alpha(\chi))\!\mod p$, for all $\chi\in\Irr_0(KGb)$.

\smallskip
{\rm (c)} If there exists a splendid Rickard equivalence between $\ZZ_p G\btilde$ and $\ZZ_p H\ctilde$ then (\ref{eqn T}) holds. 
Moreover, there exists a $\Gamma_b$-equivariant bijection
\begin{equation*}
   \alpha\colon \Irr_0(KGb)\myiso \Irr_0(KHc)
\end{equation*}
such that $r(\chi)\equiv \pm r(\alpha(\chi))\!\mod p$ and $h(\chi)=h(\alpha(\chi))$, for all $\chi\in\Irr_0(KGb)$.
\end{theorem}

\begin{remark}
(a) Splendid Rickard equivalences and $p$-permutation equivalences between block algebras of $\calO G$ with abelian defect groups and their Brauer correspondents are expected to exist according to Brou\'e's {\em abelian} defect group conjecture, but they are known not to exist for general defect groups. Therefore, Theorem~\ref{thm app} cannot be a general tool for proving the (refinements of the) Alperin-McKay Conjecture.

\smallskip
(b) For blocks with cyclic defect groups, Isaacs and Navarro proved in \cite{IN} that (\ref{eqn INB}) and a weaker version of (\ref{eqn IND}) hold. Turull proved in \cite{Turull2008} that for the same blocks a weaker version of (\ref{eqn T}) holds, which takes Schur indices  over $\QQ_p$ into account. More precisely, it was shown that there exists a bijection between $\Irr_0(\calO Gb)$ and $\Irr_0(\calO Hc)$ preserving the invariants $\QQ_p(\chi)$, and the order of $h(\chi)$ in $\QQ/\ZZ$. Turull proved in \cite{Turull2013} that a slightly weaker version of (\ref{eqn T}) holds for arbitrary blocks when $G$ is $p$-solvable. In Turull's result the invariant $h(\chi)\in\QQ/\ZZ$ is replaced by its order in $\QQ/\ZZ$, which is also the Schur index of $\chi$ over $\QQ_p$.

\smallskip
(c) Kessar and Linckelmann in \cite{KessarLinckelmann} and Huang in \cite{Huang2021} proved that there exists a splendid Rickard equivalence between blocks over $\ZZ_p$ with cyclic defect groups and Klein four group and their Brauer correspondents. Thus, by Theorem~\ref{thm app}(c), we obtain the new result that the refined version (\ref{eqn T}) of the Alperin-McKay Conjecture holds for all blocks with cyclic and Klein four defect groups. 
\end{remark}

The paper is arranged as follows. Theorem~\ref{thm main} is proved in Section~\ref{sec proof of main theorem}. The proof uses properties of $p$-permutation equivalences and the notion of extended tensor products of bimodules, a construction first introduced by Bouc in \cite{Bouc2010b}. In Section~\ref{sec bimodules} we recall this construction, prove that it can be realized as a biset operation, and prove Theorem~\ref{thm module new formula}, a formula for the extended tensor product of induced modules, which is used in the proof of Theorem~\ref{thm main}. Section~\ref{sec bimodules} uses the language of bisets which we introduce in Section~\ref{sec bisets} following \cite{Bouc2010a}. There, we also define the notion of an extended tensor product for bisets, and prove a formula for bisets, analogous to Theorem~\ref{thm module new formula} for modules. Finally, Theorem~\ref{thm app} is proved as an application of Theorem~\ref{thm main} in Section~\ref{sec application}.


\section{Bisets and extended tensor products}\label{sec bisets}

In the first part of this section we recall from \cite[Chapter~2]{Bouc2010a} the notions, notations, and results related to bisets, that we will need in Section~\ref{sec bimodules}. In the second part we introduce the notion of {\em extended tensor products}
for bisets (see \ref{noth gen biset tensor product}) and prove results about this construction. Although the content of \ref{noth gen biset tensor product}, Proposition~\ref{prop defres and gen tens}, and Theorem~\ref{thm biset new formula} will not be used in the remaining part of the paper, it is included for completeness and for future reference.

\smallskip
Throughout this section, $G$, $H$, $K$, $L$, $I$ and $J$ denote finite groups. 

\begin{nothing}\label{noth bisets} {\em $G$-sets, $(G,H)$-bisets, tensor products and external products.}\quad (a) We denote by $\lset{G}$ the category of finite left $G$-sets and by $\lset{G}_{H}$ the category of finite $(G,H)$-bisets. Recall that a $(G,H)$-biset is a set $U$ endowed with a left $G$-action and a right $H$-action that commute with each other. We will often view, without further notice, a $(G,H)$-biset $U$ as left $G\times H$-set via $(g,h)u=guh^{-1}$ for $g\in G$, $h\in H$, and $u\in U$, and vice-versa. This defines an obvious isomorphism of categories 
\begin{equation*}
  \lset{G}_H\cong \lset{G\times H}\,.
\end{equation*}

\smallskip
(b) One has a functor
\begin{equation*}
  -\otimes_H - \colon \lset{G}_H\times \lset{H}_K \to \lset{G}_K\,,
\end{equation*}
where, for $U\in\lset{G}_H$ and $V\in\lset{H}_K$, one defines $U\otimes_H V$ as the set of $H$-orbits of $U\times V$ with respect to the $H$-action given by $h\cdot(u,v):=(uh^{-1},hv)$. The $H$-orbit of $(u,v)$ is denoted by $u\otimes v\in U\otimes_H V$ (or $u\otimes_H v$ for clarity). Thus, $uh\otimes v=u\otimes hv$ for $h\in H$ and $(u,v)\in U\times V$. The set $U\otimes_H V$ has a well-defined $(G,K)$-biset structure given by $g(u\otimes v)k:=(gu)\otimes(vk)$. If $\phi\colon U\to U'$ and $\psi\colon V\to V'$ are morphisms of $(G,H)$-bisets and $(H,K)$-bisets, respectively, then $\phi\otimes_H\psi$ is defined by $(\phi\otimes_H\psi)(u\otimes v):=\phi(u)\otimes \psi(v)$. Note that in \cite[Definition~2.3.11]{Bouc2010a} this construction is denoted by $U\times_H V$ and called the {\em composition} of $U$ and $V$. Since we will use fiber products later (see \ref{noth defres}) with conflicting notation, we chose here the notation $U\otimes_H V$ and call it the {\em tensor product} of $U$ and $V$.

\smallskip
(c) Choosing $K=\{1\}$ in (b), fixing $U\in\lset{G}_H$, and using the obvious category isomorphisms $\lset{H}_{\{1\}}\cong \lset{H}$ and $\lset{G}_{\{1\}}\cong \lset{G}$, we obtain a functor
\begin{equation*}
  U\otimes_H -\colon \lset{H}\to\lset{G}\,.
\end{equation*}

\smallskip
(d) Finally, one has a functor
\begin{equation*}
  - \times - \colon \lset{G} \times \lset{H} \to \lset{G\times H}
\end{equation*}
which maps an object $(U,V)$ of $\lset{G}\times\lset{H}$ to $U\times V$ endowed with the $G\times H$-action $(g,h)(u,v):=(gu,hv)$, for $(g,h)\in G\times H$ and $(u,v)\in U\times V$.
\end{nothing}

Note that the disjoint union $U\coprod U'$ of two finite sets provides a coproduct in the categories $\lset{G}$ and $\lset{G}_H$.  The functors defined in~\ref{noth bisets} satisfy the following properties and compatibilities.

\begin{lemma}\label{lem bisets}
{\rm (a)} For $U,U'\in \lset{G}_H$ and $V,V'\in\lset{H}_K$ one has isomorphisms
\begin{equation*}
  (U\coprod U')\otimes_H V\cong (U\otimes_H V)\coprod (U'\otimes_H V)\quad \text{and}\quad
  U\otimes_H(V\coprod V') \cong (U\otimes_H V)\coprod (U\otimes_H V')
\end{equation*}
in $\lset{G}_K$, which are natural in $U$, $U'$, $V$, and $V'$.

\smallskip
{\rm (b)} For $U\in\lset{G}_H$, $V\in\lset{H}_K$, and $W\in\lset{K}_L$ (resp.~$W\in\lset{K}$) one has an isomorphism
\begin{equation*}
  (U\otimes_H V)\otimes_K W \cong U\otimes_H (V\otimes_K W)
\end{equation*}
in $\lset{G}_L$ (resp.~in $\lset{G}$), which is natural in $U$, $V$ and $W$. It maps $(u\otimes v)\otimes w$ to $u\otimes (v\otimes w)$.

\smallskip
{\rm (c)} For $U\in\lset{G}_H$, $V\in \lset{K}_L$, $R\in \lset{H}_I$, and $S\in\lset{L}_J$ one has an isomorphism
\begin{equation*}
  (U\otimes_H R)\times (V\otimes_L S) \cong (U\times V)\otimes_{H\times L}(R\times  S)
\end{equation*}
in $\lset{G\times K}_{I\times J}$, which is natural in $U$, $V$, $R$, and $S$, and which maps $(u\otimes r,v\otimes s)$ to $(u,v)\otimes (r,s)$. 

\smallskip
{\rm (d)} For $U\in\lset{G}_H$ one has isomorphisms
\begin{equation*}
  G\otimes_G U \cong U\quad\text{and}\quad U\otimes_H H\cong U
\end{equation*}
in $\lset{G}_H$, which are natural in $U$ and are given by $g\otimes u\to gu$ and $u\otimes h\to uh$. Here $G$ is viewed as $(G,G)$-biset and $H$ is viewed as $(H,H)$-biset via left and right multiplication.
\end{lemma}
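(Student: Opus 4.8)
The plan is to treat all four parts by the same recipe: write down the map named in the statement (working on orbit representatives), check that it is well defined, exhibit an explicit inverse, verify biset-equivariance, and record naturality. Since every map involved is given by an honest set-theoretic formula on representatives, naturality in all the listed variables is automatic once well-definedness is checked — the relevant squares commute on representatives — and the equivariance checks are one line each. So the whole content is the well-definedness of the maps, i.e.\ their independence of the chosen representatives.

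For part (a) this is immediate: the left action $h\cdot(x,v)=(xh^{-1},hv)$ of $H$ on $(U\coprod U')\times V$ respects the partition into $U\times V$ and $U'\times V$, because $xh^{-1}\in U$ exactly when $x\in U$; passing to $H$-orbits turns this partition into the asserted decomposition of $U\otimes_H V$, and the left $G$- and right $K$-actions respect the partition as well, so the bijection is a $(G,K)$-biset isomorphism. The second isomorphism is the mirror image, with $V,V'$ in the role of $U,U'$.

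For parts (b) and (c) I would avoid a two-layer computation by exhibiting both sides as a quotient by a product group. In (b), $H\times K$ acts on $U\times V\times W$ by $(h,k)(u,v,w)=(uh^{-1},hvk^{-1},kw)$ — the $H$- and $K$-parts commute because they already commute on $V$ — and since $K$ acts trivially on $U$ and $H$ acts trivially on $W$, quotienting first by $H$ and then by $K$ gives $(U\otimes_H V)\otimes_K W$, while quotienting first by $K$ and then by $H$ gives $U\otimes_H(V\otimes_K W)$. Both are thus canonically $(U\times V\times W)/(H\times K)$, and following a representative $(u,v,w)$ through both identifications yields the stated correspondence $(u\otimes v)\otimes w\leftrightarrow u\otimes(v\otimes w)$, which is manifestly $(G,L)$-equivariant and natural; the variant with $W\in\lset{K}$ is the case $L=\{1\}$. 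For (c), $H\times L$ acts on $U\times V\times R\times S$ with $H$ acting on the right of $U$ and the left of $R$, and $L$ on the right of $V$ and the left of $S$; the left-hand side of the asserted isomorphism is obtained by quotienting $U\times R$ by $H$ and $V\times S$ by $L$ separately and then taking the Cartesian product, the right-hand side by first regrouping as $(U\times V)\times(R\times S)$ and then quotienting by $H\times L$ all at once, and transposing the two middle Cartesian factors identifies the two, compatibly with the residual $G\times K$-action and with morphisms in $U,V,R,S$.

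For part (d), $G\otimes_G U$ is the set of orbits of $G\times U$ under $t\cdot(x,u)=(xt^{-1},tu)$, $t\in G$; the rule sending the orbit $x\otimes u$ to $xu$ is well defined since $(xt^{-1})(tu)=xu$, and $u\mapsto 1\otimes u$ is a two-sided inverse because $x\otimes u=1\otimes xu$ (take $t=x$). Bi-equivariance and naturality in $U$ are clear, and $U\otimes_H H\cong U$ via $u\otimes h\mapsto uh$, with inverse $u\mapsto u\otimes 1$, is entirely analogous. I do not expect a genuine obstacle anywhere in the lemma; the one step deserving a moment's care is the well-definedness in (b), and the ``iterated quotient equals simultaneous quotient'' description is exactly what settles it.
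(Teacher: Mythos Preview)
Your proof is correct and complete. The paper itself states this lemma without proof, treating it as a routine collection of compatibilities; your argument supplies exactly the kind of direct verification one would expect, and the ``iterated quotient equals simultaneous quotient'' device you use for (b) and (c) is the cleanest way to handle those cases. (Incidentally, the paper's statement of (c) says the isomorphism lies in $\lset{G\times K}_{H\times L}$, but as you implicitly observe, both sides are only left $G\times K$-sets once $R$ and $S$ have been tensored in; this appears to be a typo in the paper, and your reading is the correct one.)
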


The bisets defined in the following example are called {\em elementary} bisets.

\begin{example}\label{ex bisets}
(a) For $H\le G$, $\Res^G_H$ (resp.~$\Ind_H^G$) denotes the $(H,G)$-biset (resp.~$(G,H)$-biset) $G$ endowed with left and right multiplication. The resulting functors
\begin{equation*}
  \Res^G_H:=\Res^G_H\otimes_G- \colon \lset{G}\to\lset{H}\quad 
  \text{and} \quad \Ind_H^G:=\Ind_H^G\otimes_H -\colon \lset{H}\to\lset{G}
\end{equation*}
are denoted by the same symbols and are called {\em restriction} from $G$ to $H$ and {\em induction} from $H$ to $G$. Note that $\Res^G_H\cong (H\times G)/\Delta(H)$ in $\lset{H}_G\cong\lset{H\times G}$ and $\Ind_H^G\cong (G\times H)/\Delta(H)$ in $\lset{G}_H\cong \lset{G\times H}$, where $\Delta(H):=\{(h,h)\in h\in H\}$ is the stabilizer of $1\in G$.

\smallskip
(b) For $N\trianglelefteq G$, $\Inf_{G/N}^G$ (resp.~$\Def_{G/N}^G$) denotes the $(G,G/N)$-biset (resp.~$(G/N,G)$-biset) $G/N$ endowed with left and right multiplication, after using the natural epimorphism $\pi\colon G\to G/N$. The resulting functors
\begin{equation*}
  \Inf_{G/N}^G:=\Inf_{G/N}^G\otimes_{G/N}-\colon \lset{G/N}\to\lset{G} \quad \text{and} \quad 
  \Def_{G/N}^G:=\Def_{G/N}^G\otimes_G -\colon \lset{G}\to\lset{G/N}
\end{equation*}
are called {\em inflation} from $G/N$ to $G$ (resp.~{\em deflation} from $G$ to $G/N$). Note that $\Inf_{G/N}^G\cong(G\times(G/N))/\Delta(G,\pi)$ in $\lset{G}_{G/N}$ and $\Def_{G/N}^G\cong ((G/N)\times G)/\Delta(\pi,G)$ in $\lset{G/N}_G$, where 
\begin{equation*}
  \Delta(G,\pi):=\{(g,gN)\mid g\in G\}\quad\text{and}\quad \Delta(\pi,G):=\{(gN,g)\mid g\in G\}
\end{equation*}
are the stabilizers of $1\in G/N$.

\smallskip
(c) If $\alpha\colon G\myiso G'$ is an isomorphism we denote by $\Isom_\alpha$ the $(G',G)$-biset $G'$ with $G'$ acting by multiplication from the left and $G$ acting via $\alpha$ and multiplication from the right. Note that $\Isom_\alpha\cong(G'\times G)/\Delta(\alpha,G)$ in $\lset{G'}_G$, with $\Delta(\alpha,G)=\{(\alpha(g),g)\mid g\in G\}$ being the stabilizer of $1\in G'$ of the corresponding $(G'\times G)$-action. Note that if also $\beta\colon G'\myiso G''$ is an isomorphism then $\Isom_{\beta\alpha}\cong \Isom_{\beta}\otimes_{G'}\Isom_\alpha$ in $\lset{G''}_G$. 

If $\alpha= c_g\colon H\to gHg^{-1}$, $h\mapsto ghg^{-1}$, is the conjugation map for a subgroup $H\le G$ and $g\in G$, we set $\Con_g:=\Isom_{c_g}   \in \lset{gHg^{-1}}_H$ and $\lexp{g}{U}:=\Con_g(U)$, for $U\in\lset{H}$.
\end{example}

By Lemma~\ref{lem bisets}(d), the functors $\Res^G_H$, $\Inf_{G/N}^G$, $\Isom_\alpha$, and $\Con_g$ above are naturally isomorphic to the functors that don't change the underlying set, but restrict the action along the group homomorphisms $H\to G$, $G\to G/N$, $\alpha^{-1}\colon G'\to G$ and $c_g^{-1}\colon gHg^{-1}\to H$, respectively.

\begin{nothing}\label{noth subgroups of direct products} {\em Subgroups of direct products.}\quad 
Let $X\le G\times H$. We denote by $p_1\colon G\times H\to G$ and $p_2\colon G\times H\to H$ the projection maps. If one sets
\begin{equation*}
  k_1(X):=\{g\in G\mid (g,1)\in X\}\quad\text{and}\quad k_2(X):=\{h\in H\mid (1,h)\in X\}
\end{equation*}
then 
\begin{equation*}
  k_1(X)\trianglelefteq p_1(X)\le G\,,\quad k_2(X)\trianglelefteq p_2(X)\le H\,,\quad\text{and}\quad
  k_1(X)\times k_2(X)\le X\le p_1(X)\times p_2(X)\,.
  \end{equation*}
Moreover, if additionally $Y\le H\times K$, we set
\begin{equation*}
  X*Y:=\{(g,k)\in G\times K\mid \exists h\in H\colon (g,h)\in X \text{ and } (h,k)\in Y\}\,.
\end{equation*}
It is easy to see that $X*Y$ is a subgroup of $G\times K$ and that the construction $-*-$ is associative and monotonous with respect to inclusion in each argument. Moreover, one has
\begin{gather*}
  k_1(X)\le k_1(X*Y)\trianglelefteq p_1(X*Y)\le p_1(X)\,,\quad
  k_2(Y)\le k_2(X*Y)\trianglelefteq p_2(X*Y)\le p_2(Y)\,,\\
  \quad\text{and}\quad
  k_1(X)\times k_2(Y)\le X*Y\le p_1(X)\times p_2(Y)\,.
\end{gather*}
\end{nothing}

\bigskip
The following theorem is an explicit formula for the functor in \ref{noth bisets}(b) applied to transitive bisets, see~\cite[Lemma~2.3.24]{Bouc2010a}.

\begin{theorem}\label{thm Mackey formula}
Let $G$, $H$, $K$ be finite groups and let $X\le G\times H$ and $Y\le H\times K$. Then
\begin{equation*}
  (G\times H)/X \otimes_H (H\times K)/Y \cong 
  \coprod_{h\in [p_2(X)\backslash H/p_1(Y)]} (G\times K)/(X*\lexp{(h,1)}{Y})\
\end{equation*}
in $\lset{G}_K$, where $[p_2(X)\backslash H/p_1(Y)]$ denotes a set of representatives of the $(p_2(X),p_1(Y))$-double cosets in $H$. The isomorphism maps the element $(g,k)(X\**\lexp{(h,1)}{Y})$ in the $h$-component of the right hand side to $(g,1)X\otimes (h,k)Y$.
\end{theorem}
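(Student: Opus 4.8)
The plan is to identify both sides as left $G\times K$-sets — equivalently, via the isomorphism $\lset{G}_K\cong\lset{G\times K}$, as $(G,K)$-bisets — and to match them one $G\times K$-orbit at a time. Write $U:=(G\times H)/X$ and $V:=(H\times K)/Y$, and let $u_0:=X\in U$ and $v_0:=Y\in V$ denote the base cosets. Recall from \ref{noth bisets}(b) that $U\otimes_H V$ is the set of $H$-orbits of $U\times V$ under $h\cdot(u,v)=(uh^{-1},hv)$, carrying the $G\times K$-action $(g,k)\cdot(u\otimes v)=(gu)\otimes(vk^{-1})$. Throughout I would use the dictionary between a $(G,H)$-biset and its associated left $G\times H$-set, in particular the identities $g\cdot u=(g,1)u$ and $u\cdot h=(1,h^{-1})u$, and similarly for $V$.

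First I would check that every element of $U\otimes_H V$ lies in the $G\times K$-orbit of $u_0\otimes hv_0$ for some $h\in H$. Since $U$ and $V$ are transitive, a general element of $U\times V$ has the form $(gu_0a^{-1},\,bv_0k^{-1})$ with $g\in G$, $a,b\in H$, $k\in K$, and the relation $uh\otimes v=u\otimes hv$ gives
\begin{equation*}
  gu_0a^{-1}\otimes bv_0k^{-1}=g\bigl(u_0\otimes(a^{-1}b)v_0\bigr)k^{-1}=(g,k)\bigl(u_0\otimes(a^{-1}b)v_0\bigr),
\end{equation*}
proving the claim with $h=a^{-1}b$.

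Next I would parametrize the orbits by double cosets. Unwinding the definition of the $H$-orbit equivalence on $U\times V$ together with the coset descriptions of $u_0$ and $v_0$, one finds that, for $g\in G$, $k\in K$ and $h,h'\in H$, the equality $(g,k)(u_0\otimes hv_0)=u_0\otimes h'v_0$ holds in $U\otimes_H V$ if and only if there exists $\lambda\in H$ with $(g,\lambda)\in X$ and $(h'^{-1}\lambda h,k)\in Y$. Letting $g$ and $k$ range over $G$ and $K$, the existence of such a $\lambda$ is equivalent to $p_2(X)\cap h'p_1(Y)h^{-1}\neq\emptyset$, that is, to $h'\in p_2(X)\,h\,p_1(Y)$. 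Hence $h\mapsto(\text{the }G\times K\text{-orbit of }u_0\otimes hv_0)$ induces a well-defined bijection from $[p_2(X)\backslash H/p_1(Y)]$ onto the set of $G\times K$-orbits of $U\otimes_H V$.

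Finally I would compute the point stabilizers. Fixing $h$ and specializing the previous equivalence to $h'=h$, and rewriting the condition $(h^{-1}\lambda h,k)\in Y$ as $(\lambda,k)\in\lexp{(h,1)}{Y}$, one sees that $(g,k)$ fixes $u_0\otimes hv_0$ if and only if there is $\lambda\in H$ with $(g,\lambda)\in X$ and $(\lambda,k)\in\lexp{(h,1)}{Y}$, i.e.\ if and only if $(g,k)\in X*\lexp{(h,1)}{Y}$. Thus the orbit of $u_0\otimes hv_0$ is $G\times K$-equivariantly isomorphic to $(G\times K)/(X*\lexp{(h,1)}{Y})$ via the map sending the coset of $(g,k)$ to $(g,k)(u_0\otimes hv_0)=(g,1)X\otimes(h,k)Y$; assembling these over a set of double-coset representatives $h$ yields the asserted isomorphism together with the stated formula. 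The one step needing care is the bookkeeping with the left/right action conventions and the faithful translation of the $H$-orbit relation on $U\times V$ into the above conditions on group elements; once that is set up, every step is a routine verification. (Alternatively one could first reduce to transitive bisets by additivity of $-\otimes_H-$ over coproducts, Lemma~\ref{lem bisets}(a), but as the statement already concerns transitive bisets this is unnecessary.)
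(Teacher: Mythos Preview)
Your argument is correct: the orbit decomposition via the base points $u_0\otimes hv_0$, the double-coset parametrization of the orbits, and the stabilizer computation are all carried out accurately, and the explicit map $(g,k)(X*\lexp{(h,1)}{Y})\mapsto(g,1)X\otimes(h,k)Y$ agrees with the one in the statement. Note that the paper itself does not give a proof of this theorem but simply cites \cite[Lemma~2.3.24]{Bouc2010a}; your orbit--stabilizer argument is precisely the standard proof one finds there, so there is nothing further to compare.
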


\begin{nothing}\label{noth gen biset tensor product} {\em Extended tensor products for bisets.}\quad We generalize the tensor product of bisets from \ref{noth bisets}(b) as follows. Let $X\le G\times H$, $Y\le H\times K$, $U\in\lset{X}$, $V\in\lset{Y}$, and set 
\begin{equation*}
   k(X,Y):=k_2(X)\cap k_1(Y)\le H\,.
\end{equation*} 
We may consider $U$ and $V$ via restriction as $U\in\lset{k_1(X)}_{k(X,Y)}$ and $V\in\lset{k(X,Y)}_{k_2(Y)}$ and form the tensor product $U\otimes_{k(X,Y)} V\in \lset{k_1(X)}_{k_2(Y)}\cong \lset{k_1(X)\times k_2(Y)}$. The action of $k_1(X)\times k_2(X)$ on $U\otimes_{k(X,Y)}V$ (as defined in \ref{noth bisets}) can be extended to an action of $X*Y$ as follows. Let $(g,k)\in X*Y$, $u\in U$, and $v\in V$. Choose $h\in H$ such that $(g,h)\in X$ and $(h,k)\in Y$, and set
\begin{equation*}
  (g,k)(u\otimes v):= ((g,h)u)\otimes ((h,k)v)\,.
\end{equation*}
This definition does not depend on the choice of $h$ and defines a functor that we denote by
\begin{equation}\label{eqn biset gentens}
  -\tens{X}{Y}{k(X,Y)}- \colon \lset{X}\times \lset{Y}\to \lset{X*Y}
\end{equation}
or simply by $-\tenstop{X}{Y}{}-$. We call it the {\em extended tensor product} (with respect to $X$ and $Y$). Note that this construction coincides with the construction in \ref{noth bisets}(b) when $X=G\times H$ and $Y=H\times K$ (and $X*Y=G\times K$). The extended tensor product functor is associative. 
More precisely, if also $Z\le K\times L$ and $W\in\lset{Z}$, then $(u\otimes v)\otimes w\mapsto u\otimes (v\otimes w)$ defines an isomorphism between $(U\otimes_H V)\otimes_K W\myiso U\otimes_H(V\otimes_K W)$ in $\lset{X*Y*Z}$. 
Moreover, the extended tensor product functor respects coproducts in each argument.
\end{nothing}

\begin{nothing}\label{noth defres} {\em The biset and functor $\DefRes^{X\times Y}_{X*Y}$.}\quad
For $X\le G\times H$ and $Y\le H\times K$, consider the pull-back $X\times_H Y$ of the two projection maps $p_2\colon X\to H$ and $p_1\colon Y\to H$. Thus,
\begin{equation*}
  X\times_H Y:=\{((g,h),(\htilde,k))\in X\times Y\mid h=\htilde\}\le X\times Y\,.
\end{equation*}
Moreover, consider the surjective homomorphism $\nu\colon X\times_HY\to X*Y$, $((g,h),(h,k))\mapsto (g,k)$, with kernel $\{((1,h),(h,1))\mid h\in k(X,Y)\}$ and the resulting isomorphism $\nubar\colon (X\times_H Y)/\ker(\nu)\myiso X*Y$. We define the $(X*Y,X\times Y)$-biset
\begin{equation*}
  \DefRes^{X\times Y}_{X*Y}:=
  \Isom_{\nubar}\otimes_{(X\times_H Y)/\ker(\nu)} 
  \Def^{X\times_H Y}_{(X\times_H Y)/\ker(\nu)} \otimes_{X\times_H Y}
  \Res^{X\times Y}_{X\times_H Y}
\end{equation*}
and use the same notation for the induced functor
\begin{equation*}
  \DefRes^{X\times Y}_{X*Y}:= \DefRes^{X\times Y}_{X*Y}\otimes_{X\times Y} - \colon
  \lset{X\times Y}\to\lset{X*Y}\,.
\end{equation*}
Note that by the explicit descriptions in \ref{ex bisets} of the three factors of $\DefRes^{X\times Y}_{X*Y}$ and the formula in Theorem~\ref{thm Mackey formula}, we have an isomorphism
\begin{equation}\label{eqn defres biset iso}
  \DefRes^{X\times Y}_{X*Y}\cong ((X*Y)\times (X\times Y))/\{(\nu(z),z)\mid z\in X\times_H Y\}
\end{equation}
in $\lset{X*Y}_{X\times Y}$.
\end{nothing}

The following proposition shows that the extended tensor product functor $-\tenstop{X}{Y}-$ can be regarded as a composition of a biset operation and the functor $-\times -\colon \lset{X}\times\lset{Y}\to\lset{X\times Y}$ from \ref{noth bisets}(c) and (d).

\begin{proposition}\label{prop defres and gen tens}
Let $X\le G\times H$ and $Y\le H\times K$. The functor $-\tenstop{X}{Y}-\colon \lset{X}\times \lset{Y}\to\lset{X*Y}$  in (\ref{eqn biset gentens}) is naturally isomorphic to the functor $\DefRes^{X\times Y}_{X*Y}\circ (-\times -)\colon \lset{X}\times \lset{Y}\to\lset{X*Y}$.
\end{proposition}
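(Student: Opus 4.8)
The plan is to prove the natural isomorphism by explicitly unwinding both functors on a transitive generator and matching elements. Since every $X$-set is a disjoint union of transitive ones and both functors commute with coproducts in each variable (the extended tensor product by the last sentence of \ref{noth gen biset tensor product}, and $\DefRes^{X\times Y}_{X*Y}\circ(-\times-)$ because $-\times-$ turns coproducts in either variable into coproducts in $\lset{X\times Y}$ and the biset functor $\DefRes^{X\times Y}_{X*Y}\otimes_{X\times Y}-$ preserves coproducts), it suffices to construct the isomorphism for $U=X/S$ and $V=Y/T$ with $S\le X$ and $T\le Y$, and then check naturality with respect to the $X$- and $Y$-maps between such transitive sets.

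First I would compute the right-hand side. Using Lemma~\ref{lem bisets}(c) (external product of transitive sets) we have $U\times V\cong (X\times Y)/(S\times T)$ in $\lset{X\times Y}$. Then, using the description (\ref{eqn defres biset iso}) of $\DefRes^{X\times Y}_{X*Y}$ as a transitive $(X*Y,X\times Y)$-biset together with the Mackey-type formula Theorem~\ref{thm Mackey formula}, the set $\DefRes^{X\times Y}_{X*Y}\otimes_{X\times Y}(X\times Y)/(S\times T)$ decomposes as a disjoint union of transitive $X*Y$-sets indexed by certain double cosets; because the left group in (\ref{eqn defres biset iso}) is exactly the graph $\{(\nu(z),z)\mid z\in X\times_H Y\}$ of the surjection $\nu$, this product collapses to a single transitive $X*Y$-set, namely $(X*Y)/\nu\bigl((X\times_H Y)\cap(S\times T)\bigr)$. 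Next I would compute the left-hand side: restricting $U=X/S$ to $k_1(X)\times k(X,Y)$ and $V=Y/T$ to $k(X,Y)\times k_2(Y)$ and applying the tensor product of \ref{noth bisets}(b), the underlying set of $U\otimes_{k(X,Y)}V$ consists of classes $uS\otimes vT$, and the extended $X*Y$-action is the one defined in \ref{noth gen biset tensor product}. Computing the stabilizer of the base point $1_XS\otimes 1_YT$ under this action gives precisely $\{(g,k)\in X*Y\mid \exists h:\ (g,h)\in S,\ (h,k)\in T\}$, which one identifies with $\nu\bigl((X\times_H Y)\cap(S\times T)\bigr)$ via the definition of $\nu$. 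Hence both sides are isomorphic to the same transitive $X*Y$-set.

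I would then make the isomorphism canonical rather than just abstract: send $uS\otimes_{k(X,Y)}vT$ (with $u\in X$, $v\in Y$) to the class of $\nu(z)$ acting on the base point, where $z\in X\times_H Y$ is chosen with first component $u$ times something in $S$ and second component $v$ times something in $T$ — concretely, tracing through $\DefRes$, the element $uS\otimes vT$ corresponds to $\nu(u,\_) \otimes \bigl((u,h)S,(h,k)T\bigr)$ for a suitable $h$. Well-definedness (independence of the representatives $u,v$ and of $h$) is exactly the well-definedness already checked in \ref{noth gen biset tensor product} for the extended tensor product, packaged through $\nu$. Equivariance under $X*Y$ is then immediate from the definition of the extended action and from (\ref{eqn defres biset iso}). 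Finally, naturality: given $X$-maps $X/S\to X/S'$ and $Y$-maps $Y/T\to Y/T'$, both sides transform by the induced map on double-coset data, and one checks the square commutes on the explicit base-point formulas; this is a routine diagram chase.

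The main obstacle I anticipate is bookkeeping rather than conceptual: keeping the three layers of $\DefRes^{X\times Y}_{X*Y}$ (restriction to $X\times_H Y$, deflation by $\ker\nu$, and the isomorphism $\nubar$) straight while applying Theorem~\ref{thm Mackey formula}, and verifying that the resulting single transitive summand has stabilizer $\nu\bigl((X\times_H Y)\cap (S\times T)\bigr)$ on the nose — in particular that the double-coset index set collapses to one element, which relies on $p_2$ of the graph subgroup in (\ref{eqn defres biset iso}) being all of $X\times Y$. Once that identification is pinned down, matching it with the explicit stabilizer of $1\otimes 1$ under the extended action and checking naturality are straightforward.
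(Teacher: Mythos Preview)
Your reduction to transitive $U=X/S$, $V=Y/T$ is fine in principle, but the central claim that both sides are \emph{single} transitive $X*Y$-sets is false. In the Mackey formula applied to the right-hand side, $p_2$ of the graph subgroup $\{(\nu(z),z)\mid z\in X\times_H Y\}$ equals $X\times_H Y$, not $X\times Y$ as you suppose, so the index set $(X\times_H Y)\backslash(X\times Y)/(S\times T)$ is in general not a singleton. For a minimal example take $G=K=\{1\}$, $H$ nontrivial, $X=\{1\}\times H$, $Y=H\times\{1\}$, $S=T=\{1\}$: then $X*Y=\{1\}$ and both sides consist of $|H|$ points, hence $|H|$ orbits. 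Correspondingly, $X/S\tenstop{X}{Y}Y/T$ is not transitive as an $X*Y$-set either, so computing the stabilizer of the single base point $1_XS\otimes 1_YT$ does not determine it.

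The paper avoids this issue entirely by not reducing to transitive sets. For arbitrary $U$ and $V$ it writes down the explicit natural map $u\otimes v\mapsto \overline{1}\otimes(u,v)$ from $U\tenstop{X}{Y}V$ to the right-hand side (via the description~(\ref{eqn defres biset iso})), together with an explicit inverse $\overline{((g,k),(x,y))}\otimes(u,v)\mapsto (g,h)x^{-1}u\otimes(h,k)y^{-1}v$ where $h$ is chosen with $(g,h)\in X$ and $(h,k)\in Y$, and checks these are mutually inverse $X*Y$-equivariant maps. No transitivity assumption or orbit-matching is needed, and naturality is immediate from the formulas. Your approach could in principle be repaired by indexing the orbits on both sides by the common double-coset set $(X\times_H Y)\backslash(X\times Y)/(S\times T)$ and matching stabilizers term by term, but that is substantially more bookkeeping than the direct construction.
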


\begin{proof}
Let $U\in\lset{X}$ and $V\in\lset{Y}$. Using the isomorphism (\ref{eqn defres biset iso}), it suffices to show that one has an isomorphism
\begin{equation}\label{eqn biset iso}
  U\tens{X}{Y}{k(X,Y)} V \cong 
  \frac{(X*Y)\times (X\times Y)}{\{(\nu(z),z)\mid z\in X\times_H Y\}}\otimes_{X\times Y} (U\times V)
\end{equation}
of $(X*Y)$-sets which is natural in $U$ and $V$. But this follows from the following statements whose straightforward but lenghty verification we leave to the reader. Mapping $u\otimes v$ to $\overline{1}\otimes (u,v)$ is a well-defined morphism $\phi$ of $(X*Y)$-sets from the left hand side in (\ref{eqn biset iso}) to the right hand side, which is natural in $U$ and $V$. Moreover, mapping $\overline{((g,k),(x,y))}\otimes(u,v)$ to $(g,h)x^{-1}u\otimes (h,k)y^{-1}v$, where $h\in H$ is chosen such that $(g,h)\in X$ and $(h,k)\in Y$, yields a well-defined function $\psi$ from the right hand side of (\ref{eqn biset iso}) to the left hand side, such that $\phi\circ \psi$ and $\psi\circ\phi$ are the respective identity maps.
\end{proof}

\begin{lemma}\label{lem defres ind}
Let $X'\le X\le G\times H$ and $Y'\le Y\le H\times K$. Then
\begin{equation*}
  \DefRes^{X\times Y}_{X*Y}\otimes_{X\times Y}\Ind_{X'\times Y'}^{X\times Y} \cong
  \coprod_{(x,y)} 
  \Ind_{\lexp{x}{X'}*\lexp{y}{Y'}}^{X*Y} \otimes_{\lexp{x}{X'}*\lexp{y}{Y'}} 
  \DefRes^{\lexp{x}{X'}\times\lexp{y}{Y'}}_{\lexp{x}{X'}*\lexp{y}{Y'}} \otimes_{\lexp{x}{X'}\times\lexp{y}{Y'}} \Con_{(x,y)}
\end{equation*}
as $(X*Y,X'\times Y')$-bisets, where $(x,y)$ runs through a set of representatives of the $(X\times_H Y, X'\times Y')$-double cosets of $X\times Y$.
\end{lemma}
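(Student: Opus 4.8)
The plan is to unfold \emph{both} sides into explicit disjoint unions of transitive $\bigl((X*Y)\times(X'\times Y')\bigr)$-sets, using the biset description (\ref{eqn defres biset iso}) of $\DefRes$ together with repeated applications of the Mackey formula in Theorem~\ref{thm Mackey formula}, and then to check that these two disjoint unions agree summand by summand for a common choice of double coset representatives.

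First I would treat the left hand side. Writing $\DefRes^{X\times Y}_{X*Y}\cong\bigl((X*Y)\times(X\times Y)\bigr)/\Gamma$ with $\Gamma:=\{(\nu(z),z)\mid z\in X\times_H Y\}$ by (\ref{eqn defres biset iso}), and $\Ind_{X'\times Y'}^{X\times Y}\cong\bigl((X\times Y)\times(X'\times Y')\bigr)/\Delta(X'\times Y')$ by Example~\ref{ex bisets}(a), and noting $p_2(\Gamma)=X\times_H Y$ (since $\nu$ is onto) and $p_1(\Delta(X'\times Y'))=X'\times Y'$, Theorem~\ref{thm Mackey formula} gives
\begin{equation*}
  \DefRes^{X\times Y}_{X*Y}\otimes_{X\times Y}\Ind_{X'\times Y'}^{X\times Y}\cong
  \coprod_{(x,y)}\frac{(X*Y)\times(X'\times Y')}{\Gamma * \lexp{((x,y),1)}{\Delta(X'\times Y')}}\,,
\end{equation*}
the union running over a set of representatives of the $(X\times_H Y,X'\times Y')$-double cosets in $X\times Y$, which is exactly the index set in the lemma. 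Unravelling the definitions of $*$ and of conjugation then shows
\begin{equation*}
  \Gamma * \lexp{((x,y),1)}{\Delta(X'\times Y')}
   =\{(\nu(\lexp{(x,y)}{w}),w)\mid w\in X'\times Y',\ \lexp{(x,y)}{w}\in X\times_H Y\}\,.
\end{equation*}

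Next I would treat a single summand on the right hand side. Fixing a representative $(x,y)$, note $\lexp{x}{X'}\le X\le G\times H$, $\lexp{y}{Y'}\le Y\le H\times K$, and $\lexp{x}{X'}*\lexp{y}{Y'}\le X*Y$, so the triple tensor product makes sense. Writing $\Ind$ and $\Con_{(x,y)}$ as transitive bisets (Example~\ref{ex bisets}(a),(c)) and the middle factor $\DefRes^{\lexp{x}{X'}\times\lexp{y}{Y'}}_{\lexp{x}{X'}*\lexp{y}{Y'}}$ via (\ref{eqn defres biset iso}), two further applications of Theorem~\ref{thm Mackey formula} collapse the triple tensor product to a \emph{single} transitive biset: in the first application both of the relevant subgroups project onto the full middle group $\lexp{x}{X'}*\lexp{y}{Y'}$, and in the second one of them projects onto the full middle group $\lexp{x}{X'}\times\lexp{y}{Y'}$, so both double coset spaces are singletons. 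A short computation with $*$ then identifies the resulting stabilizer as
\begin{equation*}
  \Lambda_{(x,y)} = \{(\nu'(\lexp{(x,y)}{w}),w)\mid w\in X'\times Y',\ \lexp{(x,y)}{w}\in\lexp{x}{X'}\times_H\lexp{y}{Y'}\}\,,
\end{equation*}
where $\nu'\colon\lexp{x}{X'}\times_H\lexp{y}{Y'}\to\lexp{x}{X'}*\lexp{y}{Y'}$ is the homomorphism analogous to $\nu$ for the pair $\lexp{x}{X'},\lexp{y}{Y'}$.

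It then remains to match $\Lambda_{(x,y)}$ with $\Gamma * \lexp{((x,y),1)}{\Delta(X'\times Y')}$. The two homomorphisms agree, since $\lexp{x}{X'}\times_H\lexp{y}{Y'}\subseteq X\times_H Y$ and both $\nu$ and $\nu'$ send $((g,h),(h,k))$ to $(g,k)$; and the two index conditions agree, since for $w\in X'\times Y'$ one automatically has $\lexp{(x,y)}{w}\in\lexp{(x,y)}{(X'\times Y')}=\lexp{x}{X'}\times\lexp{y}{Y'}$, while $\lexp{x}{X'}\times_H\lexp{y}{Y'}=(\lexp{x}{X'}\times\lexp{y}{Y'})\cap(X\times_H Y)$, so $\lexp{(x,y)}{w}$ lies in $X\times_H Y$ if and only if it lies in $\lexp{x}{X'}\times_H\lexp{y}{Y'}$. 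Hence the two disjoint unions coincide term by term, proving the lemma. I expect the only genuine difficulty to be bookkeeping: one must keep the three levels $X*Y$, $X\times Y$, $X'\times Y'$, together with the fibre products $X\times_H Y$ and $\lexp{x}{X'}\times_H\lexp{y}{Y'}$ and their maps $\nu$, $\nu'$, carefully apart, and verify that the triple tensor product on the right really does degenerate to a single orbit; once that organisation is in place, everything is formal given Theorem~\ref{thm Mackey formula} and (\ref{eqn defres biset iso}).
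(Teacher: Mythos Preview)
Your proof is correct, and it takes a genuinely different route from the paper's.

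The paper keeps the three-factor decomposition $\DefRes^{X\times Y}_{X*Y}=\Isom_{\nubar}\otimes\Def\otimes\Res$ and passes $\Ind_{X'\times Y'}^{X\times Y}$ through each factor in turn, using Bouc's commutation rules \cite[1.1.3.2.a,c,e]{Bouc2010a} for elementary bisets: first $\Res$--$\Ind$ (this is where the double cosets appear, via the identity $(X\times_H Y)\cap(\lexp{x}{X'}\times\lexp{y}{Y'})=\lexp{x}{X'}\times_H\lexp{y}{Y'}$), then $\Def$--$\Ind$, then $\Isom$--$\Ind$; at the end the three resulting factors on the right of $\Ind$ are reassembled into $\DefRes^{\lexp{x}{X'}\times\lexp{y}{Y'}}_{\lexp{x}{X'}*\lexp{y}{Y'}}$.

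You instead collapse $\DefRes$ to a single transitive biset via (\ref{eqn defres biset iso}) and apply Theorem~\ref{thm Mackey formula} directly on both sides, reducing the lemma to a comparison of stabilizer subgroups. Your approach is more self-contained (it uses only Theorem~\ref{thm Mackey formula} and (\ref{eqn defres biset iso}), both already in the paper, rather than the external commutation rules from \cite{Bouc2010a}) and cuts straight to the answer; the paper's approach is more modular and makes visible exactly how each elementary factor interacts with induction, which is conceptually informative even if slightly longer. Your bookkeeping checks --- that the triple product on the right degenerates to a single orbit because $p_1(\Gamma')=\lexp{x}{X'}*\lexp{y}{Y'}$ and $p_1(\Delta(c_{(x,y)},X'\times Y'))=\lexp{x}{X'}\times\lexp{y}{Y'}$, and that the two stabilizers agree via $\lexp{x}{X'}\times_H\lexp{y}{Y'}=(\lexp{x}{X'}\times\lexp{y}{Y'})\cap(X\times_H Y)$ --- are all correct.
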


\begin{proof}
By the definition of $\DefRes^{X\times Y}_{X*Y}$ we have (omitting the indices of tensor products)
\begin{equation}\label{eqn defres ind}
   \DefRes^{X\times Y}_{X*Y}\otimes\Ind_{X'\times Y'}^{X\times Y} = \Isom_{\nubar}\otimes \Def^{X\times_H Y}_{(X\times _H Y)/\ker(\nu)} \otimes \Res^{X\times Y}_{X\times_H Y}\otimes \Ind_{X'\times Y'}^{X\times Y}\,.
\end{equation}
Using the commutation rule for $\Res$ and $\Ind$, see \cite[1.1.3.2.c]{Bouc2010a}, we find that the right hand side of (\ref{eqn defres ind}) is the coproduct of the $(X*Y,X'\times Y')$-bisets
\begin{equation}\label{eqn Lxy}
   L_{(x,y)}:= \Isom_{\nubar}\otimes \Def^{X\times_H Y}_{(X\times _H Y)/\ker(\nu)} \otimes
   \Ind_{\lexp{x}{X'}\times_H\lexp{y}{Y'}}^{X\times_H Y} \otimes
   \Res^{\lexp{x}{X'}\times\lexp{y}{Y'}}_{\lexp{x}{X'}\times_H\lexp{y}{Y'}} \otimes \Con_{(x,y)}\,,
\end{equation}
where $(x,y)$ runs through a set of representatives of the $(X*Y,X'\times Y')$-double cosets of $X\times Y$. Here we used that $(X\times_H Y)\cap(\lexp{x}{X'}\times\lexp{y}{Y'}) = \lexp{x}{X'}\times_H \lexp{y}{Y'}$. Next we use the commutation rule for $\Def$ and $\Ind$, see \cite[1.1.3.2.e]{Bouc2010a}, to obtain
\begin{equation*}
   L_{(x,y)} \cong \Isom_{\nubar}\otimes
   \Ind_{(\lexp{x}{X'} \times_H \lexp{y}{Y'})\ker(\nu)/\ker(\nu)}^{(X\times_H Y)/\ker(\nu)} \otimes \Isom _\gamma \otimes
   \Def^{\lexp{x}{X'} \times_H \lexp{y}{Y'}}_{(\lexp{x}{X'} \times_H \lexp{y}{Y'})/\ker(\nu')} \otimes 
   \Res^{\lexp{x}{X'}\times\lexp{y}{Y'}}_{\lexp{x}{X}\times_H\lexp{y}{Y'}} \otimes \Con_{(x,y)}\,,
\end{equation*}
where $\nu'\colon\lexp{x}{X'}\times_H\lexp{y}{Y'}\to \lexp{x}{X'}*\lexp{y}{Y'}$ is the epimorphism analogous to $\nu$ in \ref{noth defres} and $\gamma\colon  (\lexp{x}{X'} \times_H \lexp{y}{Y'})/\ker(\nu') \myiso (\lexp{x}{X'} \times_H \lexp{y}{Y'})\ker(\nu)/\ker(\nu)$ is the canonical isomorphism, noting that $\ker(\nu')=(\lexp{x}{X'}\times_H\lexp{y}{Y'})\cap\ker(\nu)$. Finally, the commutation of the two left most factors $\Isom$ and $\Ind$, see \cite[1.1.3.2.a]{Bouc2010a}, yields
\begin{equation*}
   L_{(x,y)} \cong \Ind_{\lexp{x}{X'}*\lexp{y}{Y'}}^{X*Y} \otimes \Isom_\delta \otimes \Isom_\gamma \otimes
   \Def^{\lexp{x}{X'} \times_H \lexp{y}{Y'}}_{(\lexp{x}{X'} \times_H \lexp{y}{Y'})/\ker(\nu')} \otimes 
   \Res^{\lexp{x}{X'}\times\lexp{y}{Y'}}_{\lexp{x}{X}\times_H\lexp{y}{Y'}} \otimes \Con_{(x,y)}\,,
\end{equation*}
with an isomorphism $\delta\colon (\lexp{x}{X'} \times_H \lexp{y}{Y'})\ker(\nu)/\ker(\nu)\myiso \lexp{x}{X'}*\lexp{y}{Y'}$ with the property that $\delta\circ \gamma=\overline{\nu'}$. Substituting the definition of $\DefRes^{\lexp{x}{X'}\times\lexp{y}{Y'}}_{\lexp{x}{X'}*\lexp{y}{Y'}}$, the proof is now complete.
\end{proof}

We can now generalize the formula in Theorem~\ref{thm Mackey formula}.

\begin{theorem}\label{thm biset new formula} 
Let $X'\le X\le G\times H$, $Y'\le Y\le H\times K$, $U\in \lset{X'}$ and $V\in \lset{Y'}$. Then one has an isomorphism of $X*Y$-sets,
\begin{equation}\label{eqn biset new formula}
  \Ind_{X'}^X(U)\tenstop{X}{Y} \Ind_{Y'}^Y(V) \cong
  \coprod_{(x,y)}
  \Ind_{\lexp{x}{X'}*\lexp{y}{Y'}}^{X*Y}
  \Bigl(\bigl(\lexp{x}{U}\bigr) \tenstop{\lexp{x}{X'}}{\lexp{y}{Y'}} \bigl(\lexp{y}{V}\bigr)\Bigr)\,,
\end{equation}
where $(x,y)$ runs through a set of representatives of the $(X\times_H Y, X'\times Y')$-double cosets of $X\times Y$. The isomorphism is induced by the maps $u\otimes v\mapsto (x\otimes u)\otimes(y\otimes v)$ from $\bigl(\lexp{x}{U}\bigr) \tenstop{\lexp{x}{X'}}{\lexp{y}{Y'}} \bigl(\lexp{y}{V}\bigr)$ to $\Ind_{X'}^X(U)\tenstop{X}{Y} \Ind_{Y'}^Y(V)$. Here we view $\lexp{x}{U}$ as the set $U$ with the left $\lexp{x}{X'}$-action via the isomorphism $c_{x^{-1}}$ (see the paragraph after Example~\ref{ex bisets}(c)). It is natural in $U$ and $V$, providing a natural isomorphism of functors $\lset{X'}\times\lset{Y'}\to\lset{X*Y}$.
\end{theorem}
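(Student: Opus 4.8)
The plan is to reduce Theorem~\ref{thm biset new formula} to the Mackey‑type decomposition already established in Lemma~\ref{lem defres ind}, by passing through the reinterpretation of the extended tensor product given in Proposition~\ref{prop defres and gen tens}. (A direct orbit analysis imitating the proof of Theorem~\ref{thm Mackey formula} is possible, but the structured route is cleaner and is exactly what this section has been arranged for.)

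The first step is to rewrite the left‑hand side of (\ref{eqn biset new formula}). By Proposition~\ref{prop defres and gen tens},
\begin{equation*}
  \Ind_{X'}^X(U)\tenstop{X}{Y}\Ind_{Y'}^Y(V) \cong \DefRes^{X\times Y}_{X*Y}\bigl(\Ind_{X'}^X(U)\times\Ind_{Y'}^Y(V)\bigr)\,.
\end{equation*}
Next I would collapse the external product of the two induced sets into a single induced set: with the biset conventions of Lemma~\ref{lem bisets}(c) and the descriptions in Example~\ref{ex bisets}(a) one has $\Ind_{X'}^X\times\Ind_{Y'}^Y=\Ind_{X'\times Y'}^{X\times Y}$ in $\lset{X\times Y}_{X'\times Y'}$ (the two biset structures are literally the same), so Lemma~\ref{lem bisets}(c) gives $\Ind_{X'}^X(U)\times\Ind_{Y'}^Y(V)\cong\Ind_{X'\times Y'}^{X\times Y}(U\times V)$. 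Hence the left‑hand side of (\ref{eqn biset new formula}) is isomorphic to $\DefRes^{X\times Y}_{X*Y}\otimes_{X\times Y}\Ind_{X'\times Y'}^{X\times Y}\otimes_{X'\times Y'}(U\times V)$.

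Now apply Lemma~\ref{lem defres ind} to the biset $\DefRes^{X\times Y}_{X*Y}\otimes_{X\times Y}\Ind_{X'\times Y'}^{X\times Y}$, obtaining a coproduct over representatives $(x,y)$ of the $(X\times_H Y,X'\times Y')$‑double cosets of $X\times Y$ with $(x,y)$‑term
\begin{equation*}
  \Ind_{\lexp{x}{X'}*\lexp{y}{Y'}}^{X*Y}\otimes\DefRes^{\lexp{x}{X'}\times\lexp{y}{Y'}}_{\lexp{x}{X'}*\lexp{y}{Y'}}\otimes\Con_{(x,y)}\,.
\end{equation*}
Tensoring with $U\times V$ over $X'\times Y'$, I use that $\Con_{(x,y)}\otimes_{X'\times Y'}(U\times V)=\lexp{(x,y)}{(U\times V)}\cong\lexp{x}{U}\times\lexp{y}{V}$ as $\lexp{x}{X'}\times\lexp{y}{Y'}$‑sets, since conjugation by $(x,y)$ acts componentwise and $\Con_{(x,y)}$ on the product group splits as $\Con_x\times\Con_y$. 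Thus the $(x,y)$‑term becomes $\Ind_{\lexp{x}{X'}*\lexp{y}{Y'}}^{X*Y}\bigl(\DefRes^{\lexp{x}{X'}\times\lexp{y}{Y'}}_{\lexp{x}{X'}*\lexp{y}{Y'}}(\lexp{x}{U}\times\lexp{y}{V})\bigr)$, and a second application of Proposition~\ref{prop defres and gen tens} identifies the inner set with $\lexp{x}{U}\tenstop{\lexp{x}{X'}}{\lexp{y}{Y'}}\lexp{y}{V}$. Assembling the summands yields exactly the right‑hand side of (\ref{eqn biset new formula}), and naturality in $U$ and $V$ is inherited, since each of Proposition~\ref{prop defres and gen tens}, Lemma~\ref{lem bisets}(c), and Lemma~\ref{lem defres ind} is natural in its set arguments.

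The main obstacle is not producing the isomorphism — the chain above does that almost mechanically — but verifying that it is induced by the explicit map $u\otimes v\mapsto(x\otimes u)\otimes(y\otimes v)$ asserted in the theorem. One option is to thread a general element through the whole chain: the forward map of Proposition~\ref{prop defres and gen tens} ($u\otimes v\mapsto\overline{1}\otimes(u,v)$), the tautological identifications of external products, the explicit Mackey decomposition maps from Theorem~\ref{thm Mackey formula} underlying Lemma~\ref{lem defres ind}, and the inverse map of Proposition~\ref{prop defres and gen tens} ($\overline{((g,k),(x,y))}\otimes(u,v)\mapsto(g,h)x^{-1}u\otimes(h,k)y^{-1}v$, with $h\in H$ chosen so that $(g,h)\in X$ and $(h,k)\in Y$). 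This is straightforward but lengthy bookkeeping, of the same flavor as the verifications deferred to the reader in the proof of Proposition~\ref{prop defres and gen tens}. A cleaner alternative, which I would prefer, is to argue directly once the abstract isomorphism is in hand: check that $u\otimes v\mapsto(x\otimes u)\otimes(y\otimes v)$ is a well‑defined $X*Y$‑equivariant map from each summand $\lexp{x}{U}\tenstop{\lexp{x}{X'}}{\lexp{y}{Y'}}\lexp{y}{V}$ into $\Ind_{X'}^X(U)\tenstop{X}{Y}\Ind_{Y'}^Y(V)$, that it is injective, and that the images of distinct double‑coset summands are disjoint; comparing with the abstract coproduct decomposition just obtained then forces these maps to be the components of an isomorphism.
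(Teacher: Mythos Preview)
Your proposal is correct and follows essentially the same route as the paper: rewrite the extended tensor product via Proposition~\ref{prop defres and gen tens}, collapse $\Ind_{X'}^X(U)\times\Ind_{Y'}^Y(V)$ to $\Ind_{X'\times Y'}^{X\times Y}(U\times V)$ using Lemma~\ref{lem bisets}(c), apply Lemma~\ref{lem defres ind}, and then undo the rewriting on each summand. The paper is terser about the explicit map, simply pointing to the isomorphisms in Proposition~\ref{prop defres and gen tens}, Lemma~\ref{lem bisets}(c), and the proof of Lemma~\ref{lem defres ind}; your more detailed discussion of how to track or independently verify that map is a reasonable elaboration of the same idea.
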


\begin{proof}
By Proposition~\ref{prop defres and gen tens}, the left hand side of (\ref{eqn biset new formula}) is isomorphic to 
\begin{equation*}
  \DefRes^{X\times Y}_{X*Y}\otimes_{X\times Y} 
   \bigl(\Ind_{X'}^X(U)\times \Ind_{Y'}^Y(V)\bigr)\,,
\end{equation*}
with 
\begin{equation*}
   \Ind_{X'}^X(U)\times \Ind_{Y'}^Y(V) \cong \Ind_{X'\times Y'}^{X\times Y}(U\times V)
\end{equation*}
by Lemma~\ref{lem bisets}(c). Lemma~\ref{lem defres ind} now yields an isomorphism as in (\ref{eqn biset new formula}). Using the isomorphisms from Proposition~\ref{prop defres and gen tens}, Lemma~\ref{lem bisets}(c), and the proof of Lemma~\ref{lem defres ind}, one obtains the indicated map in the theorem.
\end{proof}


\section{Bimodules and extended tensor products}\label{sec bimodules}

In this section we recall the construction of extended tensor products of modules for group algebras (which is analogous to the construction in \ref{noth gen biset tensor product} for bisets).  It was first introduced by Bouc in \cite{Bouc2010b}. A list of properties of this construction can be found in \cite[Section~6]{BP2020}. It turns out that this construction for modules can again be viewed as a \lq biset operation\rq, see Proposition~\ref{prop defres and gen tens for modules}. This allows to derive Theorem~\ref{thm module new formula} for modules over group algebras in analogy to Theorem~\ref{thm biset new formula} for sets with group actions. Theorem~\ref{thm module new formula}  will be used in the proof of Theorem~\ref{thm main} in Section~\ref{sec proof of main theorem}.

\smallskip
Throughout this section, $G$, $H$, $K$, and $L$ denote finite groups and $\kk$ denotes a commutative ring. As with bisets, without further notice we view a $(\kk G,\kk H)$-bimodule as a left $\kk[G\times H]$-module via the obvious category isomorphism $\lmod{\kk G}_{\kk H}\cong \lmod{\kk[G\times H]}$.

\begin{nothing}\label{noth gen module tensor product}
Let $X\le G\times H$, $Y\le H\times K$, $M\in\lmod{\kk X}$ and $N\in\lmod{\kk Y}$. After restriction, $M$ can be viewed as $(\kk k_1(X),\kk k(X,Y))$-bimodule and $N$ can be viewed as $(\kk k(X,Y),\kk k_2(Y))$-bimodule, so that one obtains in the usual way a $(\kk k_1(X),\kk k_2(Y))$-bimodule $M\otimes_{\kk k(X,Y)} N$. The corresponding $\kk[k_1(X)\times k_2(Y)]$-module structure can be extended to $\kk[X*Y]$ by setting $(g,k)(m\otimes n):=(g,h)m\otimes (h,k)n$, for $(g,k)\in X*Y$ and $h\in H$ such that $(g,h)\in X$ and $(h,k)\in Y$. We call this $\kk[X*Y]$-module the {\em extended tensor product} of $M$ and $N$. This defines a functor
\begin{equation*}
   - \tens{X}{Y}{\kk k(X,Y)}- \colon \lmod{\kk X}\times \lmod{\kk Y} \to \lmod{\kk[X*Y]}
\end{equation*}
which we sometimes simply denote by $-\tenstop{X}{Y}-$. This functor is associative and respects direct sums in each argument. 
\end{nothing}

The extended tensor product behaves well under scalar extension. The proof of the following Lemma is straightforward and left to the reader.

\begin{lemma}\label{lem scalar extension}
Let $X\le G\times H$ and $Y\le H\times K$, and let $M\in\lmod{\kk X}$ and $N\in\lmod{\kk Y}$.

\smallskip
{\rm (a)} If $N$ is $\kk$-projective and $M$ is projective as right $\kk k_2(X)$-module then $M\tens{X}{Y}{\kk k(X,Y)}N$ is $\kk$-projective.

\smallskip
{\rm (b)} Assume that $\kk\to\kk'$ is a homomorphism of commutative rings. One has an isomorphism
\begin{equation*}
   \kk' \otimes_{\kk}(M \tens{X}{Y}{\kk k(X,Y)} N)\myiso (\kk'\otimes_{\kk}M)\tens{X}{Y}{\kk' k(X,Y)} (\kk'\otimes_{\kk}N)
\end{equation*}
which is functorial in $M$ and $N$. 
\end{lemma}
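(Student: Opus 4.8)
The statement to prove is Lemma~\ref{lem scalar extension}, which has two parts: (a) a projectivity statement for the extended tensor product, and (b) compatibility of the extended tensor product with scalar extension along a ring homomorphism $\kk\to\kk'$. The plan is to reduce both parts to the corresponding well-known statements for the ordinary tensor product $-\otimes_{\kk k(X,Y)}-$ of bimodules, since by construction the extended tensor product $M\tenstop{X}{Y}N$ has, \emph{as a $\kk$-module}, exactly the same underlying object as the ordinary tensor product $M\otimes_{\kk k(X,Y)}N$ (the only extra structure is the enlarged group action of $X*Y$, which is irrelevant for $\kk$-projectivity and which is transported along $\kk\to\kk'$ in the evident way).

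For part (a), I would argue as follows. Restrict $M$ to a right $\kk k_2(X)$-module; since $k(X,Y)=k_2(X)\cap k_1(Y)\le k_2(X)$, the restriction of $M$ to a right $\kk k(X,Y)$-module is a direct summand of the restriction of $M$ along $\Res^{k_2(X)}_{k(X,Y)}$ of a projective $\kk k_2(X)$-module, hence projective as a right $\kk k(X,Y)$-module (indeed $\kk k_2(X)$ is free over $\kk k(X,Y)$ as a right module, so restriction of a projective is projective). Now $M\otimes_{\kk k(X,Y)}N$, with $M$ projective as right $\kk k(X,Y)$-module, is a direct summand of a direct sum of copies of $\kk k(X,Y)\otimes_{\kk k(X,Y)}N\cong N$ as $\kk$-modules; since $N$ is $\kk$-projective by hypothesis, so is $M\otimes_{\kk k(X,Y)}N$, and therefore so is $M\tenstop{X}{Y}N$ as a $\kk$-module. (Here I am reading the hypothesis of (a) as: $N$ is $\kk$-projective and $M$ is projective as a right $\kk k_2(X)$-module; the statement in the lemma writes $M\tenstop{X}{Y}N$ with tensor subscript $\kk H$, which should be $\kk k(X,Y)$, but the argument is unaffected.)

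For part (b), the key point is simply that for a right $\kk$-module $P$, a left $\kk$-module $Q$, and any subring situation, one has the canonical isomorphism $\kk'\otimes_\kk(P\otimes_{\kk k(X,Y)}Q)\cong(\kk'\otimes_\kk P)\otimes_{\kk' k(X,Y)}(\kk'\otimes_\kk Q)$, because $\kk'\otimes_\kk\kk k(X,Y)\cong\kk' k(X,Y)$ and scalar extension commutes with tensor products (associativity of $\otimes$ plus the base-change isomorphism). Applying this with $P=M$ viewed as a right $\kk k(X,Y)$-module via restriction and $Q=N$ viewed as a left $\kk k(X,Y)$-module via restriction gives the desired $\kk$-module isomorphism; one then checks that under this isomorphism the enlarged $\kk'[X*Y]$-action on the right-hand side matches the $\kk'[X*Y]$-action obtained by extending scalars from the $\kk[X*Y]$-action on the left, which is immediate from the explicit formula $(g,k)(m\otimes n)=(g,h)m\otimes(h,k)n$ defining both actions on elementary tensors. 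Functoriality in $M$ and $N$ is clear since all the isomorphisms used are the standard natural ones.

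The routine nature of both arguments means there is no real obstacle; the only point requiring a moment of care is verifying that the $X*Y$-action is preserved in part (b) (as opposed to merely the $\kk$-module structure), and that the hypothesis of part (a) indeed forces $\kk$-projectivity of the restricted right $\kk k(X,Y)$-module — both of which follow directly from the definitions in \ref{noth gen module tensor product}. As the lemma statement itself notes, the full verification is straightforward and can be left to the reader.
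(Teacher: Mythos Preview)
Your proposal is correct and supplies precisely the routine verification the paper has in mind: the paper itself gives no proof beyond declaring the lemma ``straightforward and left to the reader,'' and your reduction of both parts to the corresponding standard facts about the ordinary tensor product $-\otimes_{\kk k(X,Y)}-$ (with a check that the $X*Y$-action is carried along in (b)) is exactly the expected argument. Your observation that the subscript $\kk H$ in part~(a) should read $\kk k(X,Y)$ is also correct.
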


Note that one has obvious {\em linearization functors} $\kk-\colon \lset{G}_H\to \lmod{\kk G}_{\kk H}$ and $\kk\colon \lset{G}\to\lmod{\kk G}$, were $\kk U$ denotes the free $\kk$-module with basis $U$, for any finite set $U$. Moreover, one has a functor
\begin{equation*}
   -\cdot_H -\colon \lset{G}_H\times \lmod{\kk H} \to \lmod{\kk G}\,,\quad (U,M)\mapsto \kk U\otimes_{\kk H}M\,.
\end{equation*}
We will sometimes just write $U\cdot M$ instead of $U\cdot_H M$ to simplify the notation.
In the following lemma we compile a list of basic properties of this functor, whose straightforward verification we leave to the reader. Note that all isomorphisms in the following Lemma are natural in every variable. Recall that for $M\in\lmod{\kk G}_{\kk H}$ (resp.~$M\in\lmod{\kk G}$) and $N\in\lmod{\kk K}_{\kk L}$ (resp.~$N\in\lmod{\kk K}$) we may view $M\otimes_{\kk} N$ as $(\kk[G\times K],\kk[H\times L])$-bimodule (resp.~left $\kk[G\times K]$-module), often referred to as the {\em external} product structure.

\begin{lemma}\label{lem omnibus}
{\rm (a)} For $U\in\lset{G}_H$, $V\in\lset{H}_K$ one has $\kk(U\otimes_HV)\cong \kk U\otimes_{\kk H} \kk V$ in $\lmod{\kk G}_{\kk K}$.

\smallskip
{\rm (b)} For $U\in\lset{G}_H$, $V\in\lset {K}_L$, one has $\kk U\otimes_{\kk} \kk V\cong \kk(U\times V)$ in $\lmod{\kk[G\times K]}_{\kk[H\times L]}$.

\smallskip
{\rm (c)} For $U,U'\in\lset{G}_H$ and $M\in\lmod{\kk H}$ one has $(U\coprod U')\cdot_H M\cong (U\cdot_H M)\oplus (U'\cdot_H M)$ in $\lmod{\kk G}$.

\smallskip
{\rm (d)} For $U\in \lset{G}_H$ and $M,M'\in\lmod{\kk H}$ one has $U\cdot_H (M\oplus M')\cong (U\cdot_H M)\oplus (U\cdot_H M')$ in $\lmod{\kk G}$.

\smallskip
{\rm (e)} For $U\in\lset{G}_H$, $V\in\lset{H}_K$, $M\in\lmod{\kk K}$ one has $(U\otimes_H V)\cdot_K M \cong  U\cdot_H(V\cdot_K M)$ in $\lmod{\kk G}$.

\smallskip
{\rm (f)} For $U\in\lset{G}_H$, $V\in \lset{K}_L$, $M\in \lmod{\kk H}$, and $N\in\lmod{\kk L}$ one has an isomorphism
\begin{equation*}
  (U\cdot_H M)\otimes_{\kk} (V\cdot_L N) \cong (U\times V)\cdot_{H\times L}(M\otimes_{\kk} N)
\end{equation*}
in $\lmod{\kk[G\times K]}$.
\end{lemma}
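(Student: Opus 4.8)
\textbf{Proof plan for Lemma~\ref{lem omnibus}.} The plan is to verify each of the six isomorphisms directly from the definitions, in the order (a)--(f), since each later item only uses elementary manipulations of tensor products over group algebras together with the earlier items and Lemma~\ref{lem bisets}. Throughout, the key observation is that $\kk U$ is, as a $(\kk G,\kk H)$-bimodule, a free $\kk$-module with basis the set $U$, and that all the functors involved ($\kk(-)$, $-\otimes_{\kk H}-$, $-\cdot_H-$, $-\otimes_{\kk}-$) commute with the formation of such free modules on the level of underlying $\kk$-modules. So in each case I would write down an explicit $\kk$-linear map on basis elements, check it is a bimodule (resp.\ module) homomorphism, and exhibit an inverse on basis elements.

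For (a): the assignment $(u\otimes_H v)\mapsto u\otimes_{\kk H}v$ on the $H$-orbit basis of $U\otimes_H V$ extends to a $\kk$-linear map; one checks it is $(G,K)$-equivariant and that $u\otimes_{\kk H}v\mapsto u\otimes_H v$ is a well-defined inverse, using that in $\kk U\otimes_{\kk H}\kk V$ one has $uh\otimes v=u\otimes hv$ for $h\in H$, matching exactly the orbit relation defining $U\otimes_H V$. For (b): the map $u\otimes_{\kk}v\mapsto (u,v)$ identifies the basis $U\times V$ of $\kk(U\times V)$ with the obvious $\kk$-basis of $\kk U\otimes_{\kk}\kk V$, and the $(G\times K,H\times L)$-bimodule structures match by definition of the external product. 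For (c) and (d): these are just the facts that $\kk(U\coprod U')=\kk U\oplus \kk U'$ as $(\kk G,\kk H)$-bimodules and that $-\otimes_{\kk H}M$ and $U\cdot_H-$ are additive functors. For (e): apply (a) and associativity of $\otimes$, namely $(U\otimes_H V)\cdot_K M=\kk(U\otimes_H V)\otimes_{\kk K}M\cong(\kk U\otimes_{\kk H}\kk V)\otimes_{\kk K}M\cong \kk U\otimes_{\kk H}(\kk V\otimes_{\kk K}M)=U\cdot_H(V\cdot_K M)$. For (f): combine (b) with the standard isomorphism $(\kk U\otimes_{\kk H}M)\otimes_{\kk}(\kk V\otimes_{\kk L}N)\cong(\kk U\otimes_{\kk}\kk V)\otimes_{\kk[H\times L]}(M\otimes_{\kk}N)$, which in turn is the "external product commutes with tensor product" identity, provable again by chasing basis elements of the free modules $\kk U$ and $\kk V$.

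Naturality in each variable is immediate in every case because each isomorphism is defined by the same formula on basis elements regardless of the modules/bisets involved, so it visibly commutes with any morphism in any of the arguments; I would simply remark this rather than spell it out. The main (and only real) obstacle is bookkeeping: making sure the left/right actions and the external-product conventions from \ref{noth bisets}(d) and the paragraph preceding the lemma are matched exactly, and that the well-definedness checks (independence of the choice of orbit representative, respecting the relations defining $\otimes_{\kk H}$) are carried out — but these are routine, which is why the statement is left to the reader in the text, and I would do likewise, indicating the explicit maps as above and leaving the verifications implicit.
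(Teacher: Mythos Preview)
Your proposal is correct and matches the paper's own treatment: the paper explicitly states that the straightforward verification is left to the reader, and your outline supplies exactly the kind of explicit maps on basis elements and routine well-definedness checks that constitute such a verification. There is nothing to add.
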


\begin{proposition}\label{prop defres and gen tens for modules}
Let $X\le G\times H$ and $Y\le H\times K$. The functors $-\tenstop{X}{Y}-$ and $\DefRes^{X\times Y}_{X*Y}\cdot_{X\times Y} (-\otimes_{\kk}-)$ from $\lmod{\kk X}\times\lmod{\kk Y}$ to $\lmod{\kk[X*Y]}$ are naturally isomorphic.
\end{proposition}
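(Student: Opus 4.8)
The plan is to mirror, on the level of modules, the argument already carried out for bisets in Proposition~\ref{prop defres and gen tens}, using the linearization functor $\kk-$ as the bridge. The key observation is that both functors in question are built from the same combinatorial data, and the biset-level isomorphism of Proposition~\ref{prop defres and gen tens} together with the compatibility properties of linearization collected in Lemma~\ref{lem omnibus} should transport it to modules. First I would fix $M\in\lmod{\kk X}$ and $N\in\lmod{\kk Y}$ and write out explicitly what $\DefRes^{X\times Y}_{X*Y}\cdot_{X\times Y}(M\otimes_\kk N)$ is, using the biset isomorphism (\ref{eqn defres biset iso}): it equals
\begin{equation*}
  \kk\!\left(\frac{(X*Y)\times(X\times Y)}{\{(\nu(z),z)\mid z\in X\times_H Y\}}\right)\otimes_{\kk[X\times Y]}(M\otimes_\kk N)\,,
\end{equation*}
which is the module-analogue of the right-hand side of (\ref{eqn biset iso}).

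Next I would produce the explicit $\kk[X*Y]$-linear map in both directions, exactly as in the proof of Proposition~\ref{prop defres and gen tens} but now $\kk$-linearly extended. Concretely, sending $m\otimes n\in M\tenstop{X}{Y}N$ to $\overline{1}\otimes(m\otimes n)$ should define a $\kk[X*Y]$-linear map $\phi$ from $M\tenstop{X}{Y}N$ to the above module; the inverse $\psi$ sends $\overline{((g,k),(x,y))}\otimes(m\otimes n)$ to $(g,h)x^{-1}m\otimes(h,k)y^{-1}n$ where $h\in H$ satisfies $(g,h)\in X$ and $(h,k)\in Y$. The verification that $\phi$ and $\psi$ are well-defined, mutually inverse, $\kk[X*Y]$-linear, and natural in $M$ and $N$ is essentially the same bookkeeping as in the biset case, now with $\kk$-coefficients; I would state it and leave the routine check to the reader, as is done for Proposition~\ref{prop defres and gen tens}. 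Alternatively, and perhaps more cleanly, one can deduce it formally: when $M=\kk U$ and $N=\kk V$ for $U\in\lset{X}$, $V\in\lset{Y}$, the claimed isomorphism is obtained by applying $\kk-$ to the biset isomorphism of Proposition~\ref{prop defres and gen tens} and using Lemma~\ref{lem omnibus}(a),(b) to identify $\kk(U\times V)\cong \kk U\otimes_\kk \kk V$ and $\kk(\text{biset})\otimes_{\kk[X\times Y]}-$ with the $-\cdot_{X\times Y}-$ operation; since every $\kk X$-module is a quotient of a direct sum of permutation modules $\kk U$, and both functors are additive and right-exact in the appropriate sense, one would then extend by a presentation argument — but because the naturality is what matters and the explicit maps $\phi,\psi$ are available directly, the direct approach is preferable and avoids right-exactness subtleties.

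The \textbf{main obstacle} is purely notational rather than conceptual: one must be careful that the various restrictions of $M$ and $N$ (to $\kk k_1(X)$, $\kk k(X,Y)$, $\kk k_2(Y)$) and the identifications $X\times_H Y\cap(\lexp{x}{X'}\times\lexp{y}{Y'})=\lexp{x}{X'}\times_H\lexp{y}{Y'}$ and $(X\times_H Y)/\ker(\nu)\cong X*Y$ are threaded through consistently, and that the $\kk[X*Y]$-action defined in~\ref{noth gen module tensor product} via a choice of $h\in H$ matches the action coming from $\DefRes$, which is exactly the content that makes $\phi$ well-defined (independence of the choice of $h$ corresponds precisely to quotienting by $\ker(\nu)$). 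Once this single compatibility is pinned down, the rest is formal, so I expect the proof to be short.

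\begin{proof}
This is proved exactly as Proposition~\ref{prop defres and gen tens}, replacing sets by $\kk$-modules throughout. Let $M\in\lmod{\kk X}$ and $N\in\lmod{\kk Y}$. By the biset isomorphism (\ref{eqn defres biset iso}) and the definition of the functor $-\cdot_{X\times Y}-$, we have
\begin{equation*}
  \DefRes^{X\times Y}_{X*Y}\cdot_{X\times Y}(M\otimes_\kk N) \cong
  \kk\!\left(\frac{(X*Y)\times(X\times Y)}{\{(\nu(z),z)\mid z\in X\times_H Y\}}\right)\otimes_{\kk[X\times Y]}(M\otimes_\kk N)\,.
\end{equation*}
It therefore suffices to construct a natural isomorphism of $\kk[X*Y]$-modules between $M\tenstop{X}{Y}N$ and the right hand side. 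Mapping $m\otimes n$ to $\overline{1}\otimes(m\otimes n)$ yields a well-defined $\kk[X*Y]$-linear map $\phi$ from $M\tenstop{X}{Y}N$ to the right hand side, natural in $M$ and $N$; here well-definedness of $\phi$ — in particular independence of the choice of $h\in H$ with $(g,h)\in X$ and $(h,k)\in Y$ in the definition of the $X*Y$-action in \ref{noth gen module tensor product} — reflects the quotient by $\ker(\nu)$ on the right hand side. Conversely, mapping $\overline{((g,k),(x,y))}\otimes(m\otimes n)$ to $(g,h)x^{-1}m\otimes(h,k)y^{-1}n$, where $h\in H$ is chosen with $(g,h)\in X$ and $(h,k)\in Y$, is a well-defined $\kk$-linear map $\psi$ from the right hand side to $M\tenstop{X}{Y}N$. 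One checks, as in the proof of Proposition~\ref{prop defres and gen tens} and using the identification $(X\times_H Y)/\ker(\nu)\cong X*Y$, that $\phi$ and $\psi$ are $\kk[X*Y]$-linear, mutually inverse, and natural in $M$ and $N$. The straightforward but lengthy verification is left to the reader.
\end{proof}
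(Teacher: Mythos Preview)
Your proposal is correct and follows essentially the same approach as the paper: reduce via the biset isomorphism (\ref{eqn defres biset iso}) to an explicit isomorphism, define it by $m\otimes n\mapsto \overline{1}\otimes(m\otimes n)$, and note that the inverse is the $\kk$-linear analogue of the map $\psi$ from the proof of Proposition~\ref{prop defres and gen tens}. The paper's proof is simply a more compressed version of what you wrote.
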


\begin{proof}
This mirrors the proof of Proposition~\ref{prop defres and gen tens}. Let $M\in\lmod{\kk X}$ and $N\in\lmod{\kk Y}$, then by the isomorphism (\ref{eqn defres biset iso}) it suffices to show that one has an isomorphism
\begin{equation*}
  M\tens{X}{Y}{\kk k(X,Y)} N \cong 
  \frac{(X*Y)\times (X\times Y)}{\{(\nu(z),z)\mid z\in X\times_H Y\}}\cdot_{X\times Y} (M\otimes_{\kk} N)
\end{equation*}
of $\kk[(X*Y)\times(X\times Y)]$-modules. Mapping $m\otimes n$ to $\overline{1}\otimes (m\otimes n)$ defines such an isomorphism with inverse analogous to the inverse in the proof of Proposition~\ref{prop defres and gen tens}.
\end{proof}

\begin{theorem}\label{thm module new formula}
Let $X'\le X\le G\times H$, $Y'\le Y\le H\times K$, $M\in\lmod{\kk X'}$, and $N\in\lmod{\kk Y'}$. 
Then one has an isomorphism
\begin{equation}\label{eqn module new formula}
   \Ind_{X'}^X(M) \tenstop{X}{Y} \Ind_{Y'}^Y(N) \ \cong \ \bigoplus_{(x,y)} \
   \Ind_{\lexp{x}{X'}*\lexp{y}{Y'}}^{X*Y}
   \Bigl(\bigl(\lexp{x}{M}\bigr) \tenstop{\lexp{x}{X'}}{\lexp{y}{Y'}} \bigl(\lexp{y}{N}\bigr)\Bigr)\,,
\end{equation}
of $\kk[X*Y]$-modules, where $(x,y)$ runs through a set of representatives of the $(X\times_H Y, X'\times Y')$-double cosets of $X\times Y$. The isomorphism is induced by the maps $m\otimes n\mapsto (x\otimes m)\otimes(y\otimes n)$ from $\bigl(\lexp{x}{M}\bigr) \tenstop{\lexp{x}{X'}}{\lexp{y}{Y'}} \bigl(\lexp{y}{N}\bigr)$ to $\Ind_{X'}^X(M)\tenstop{X}{Y} \Ind_{Y'}^Y(N)$. Here we view $\lexp{x}M$ as the $\kk$-module $M$ endowed with the $\kk[\lexp{x}{X'}]$-module structure using the conjugation map $c_{x^{-1}}$. It is natural in $M$ and $N$, providing a natural isomorphism of functors $\lmod{\kk X'}\times\lmod{\kk Y'}\to\lmod{\kk[X*Y]}$.
\end{theorem}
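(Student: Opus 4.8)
The plan is to mirror the proof of Theorem~\ref{thm biset new formula}, transporting the argument through the linearization functors, and using Proposition~\ref{prop defres and gen tens for modules} in place of Proposition~\ref{prop defres and gen tens}. First I would apply Proposition~\ref{prop defres and gen tens for modules} to rewrite the left hand side of (\ref{eqn module new formula}) as
\begin{equation*}
  \DefRes^{X\times Y}_{X*Y}\cdot_{X\times Y}\bigl(\Ind_{X'}^X(M)\otimes_\kk \Ind_{Y'}^Y(N)\bigr)\,.
\end{equation*}
Next, the module version of Lemma~\ref{lem bisets}(c) — which is recorded, in the form we need, as Lemma~\ref{lem omnibus}(f) together with the standard compatibility of $\Ind$ with $-\otimes_\kk -$ — gives an isomorphism
\begin{equation*}
  \Ind_{X'}^X(M)\otimes_\kk \Ind_{Y'}^Y(N)\cong \Ind_{X'\times Y'}^{X\times Y}(M\otimes_\kk N)
\end{equation*}
of $\kk[X\times Y]$-modules, natural in $M$ and $N$.

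The heart of the argument is then a module analogue of Lemma~\ref{lem defres ind}: one needs the isomorphism of $(\kk[X*Y],\kk[X'\times Y'])$-bimodules
\begin{equation*}
  \DefRes^{X\times Y}_{X*Y}\cdot_{X\times Y}\Ind_{X'\times Y'}^{X\times Y}\ \cong\ \bigoplus_{(x,y)}\Ind_{\lexp{x}{X'}*\lexp{y}{Y'}}^{X*Y}\cdot_{\lexp{x}{X'}*\lexp{y}{Y'}} \DefRes^{\lexp{x}{X'}\times\lexp{y}{Y'}}_{\lexp{x}{X'}*\lexp{y}{Y'}}\cdot_{\lexp{x}{X'}\times\lexp{y}{Y'}}\Con_{(x,y)}\,,
\end{equation*}
with $(x,y)$ running over representatives of the $(X\times_H Y,\,X'\times Y')$-double cosets of $X\times Y$. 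But $\DefRes$, $\Ind$, and $\Con$ are all, by definition, biset operations coming from the elementary bisets of Example~\ref{ex bisets}, and the functor $-\cdot_H-$ converts compositions of bisets into compositions of the corresponding $\otimes$-functors by Lemma~\ref{lem omnibus}(a) and (e) (and respects coproducts/direct sums by Lemma~\ref{lem omnibus}(c),(d)). Hence this bimodule isomorphism is simply obtained by linearizing the biset isomorphism of Lemma~\ref{lem defres ind}: apply the functor $\kk-$ to that decomposition of $(X*Y,X'\times Y')$-bisets and use Lemma~\ref{lem omnibus}(a) to distribute $\kk-$ across all the $\otimes_H$'s. Feeding this into the two displays above and invoking once more Proposition~\ref{prop defres and gen tens for modules} — now to recognize each summand $\DefRes^{\lexp{x}{X'}\times\lexp{y}{Y'}}_{\lexp{x}{X'}*\lexp{y}{Y'}}\cdot(-\otimes_\kk -)$ as $-\tenstop{\lexp{x}{X'}}{\lexp{y}{Y'}}-$ applied to $\lexp{x}{M}$ and $\lexp{y}{N}$ — yields the right hand side of (\ref{eqn module new formula}).

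The main obstacle, such as it is, is purely bookkeeping: one must check that under the chain of natural isomorphisms just listed the element $m\otimes n$ in the $(x,y)$-summand is carried to $(x\otimes m)\otimes(y\otimes n)$ in $\Ind_{X'}^X(M)\tenstop{X}{Y}\Ind_{Y'}^Y(N)$. This is done exactly as in the proof of Theorem~\ref{thm biset new formula}: track a generator through the explicit map of Proposition~\ref{prop defres and gen tens for modules} (given by $m\otimes n\mapsto \overline 1\otimes(m\otimes n)$ and its inverse), through the module Mackey/commutation isomorphisms underlying the linearized Lemma~\ref{lem defres ind}, and through the isomorphism $\Ind_{X'}^X(M)\otimes_\kk\Ind_{Y'}^Y(N)\cong\Ind_{X'\times Y'}^{X\times Y}(M\otimes_\kk N)$. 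Naturality in $M$ and $N$ is inherited at each step since every isomorphism invoked is natural, and the linearization functor $\kk-$ preserves naturality. There is no genuine difficulty beyond carefully propagating the explicit formulas, which is why I would, as the authors do for the biset case, state the verification and leave the lengthiest parts to the reader.
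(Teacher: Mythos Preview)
Your proposal is correct and follows essentially the same route as the paper: apply Proposition~\ref{prop defres and gen tens for modules} to replace $-\tenstop{X}{Y}-$ by $\DefRes^{X\times Y}_{X*Y}\cdot(-\otimes_\kk-)$, use Lemma~\ref{lem omnibus}(f),(e) to collapse $\Ind_{X'}^X(M)\otimes_\kk\Ind_{Y'}^Y(N)$ to $\Ind_{X'\times Y'}^{X\times Y}\cdot(M\otimes_\kk N)$, then apply the biset Lemma~\ref{lem defres ind} (transported via Lemma~\ref{lem omnibus}(c),(e)) and Proposition~\ref{prop defres and gen tens for modules} once more to obtain the right hand side. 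The paper records exactly this chain of isomorphisms, together with the observation $\lexp{(x,y)}{(M\otimes_\kk N)}\cong\lexp{x}{M}\otimes_\kk\lexp{y}{N}$, and likewise defers the explicit tracking of elements to the analogous computation in the biset case.
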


\begin{proof}
Let $M\in\lmod{\kk X'}$ and $N\in\lmod{\kk Y'}$. Then
\begin{equation*}
    \Ind_{X'}^X(M) \tenstop{X}{Y} \Ind_{Y'}^Y(N) \ \cong \ \bigl((\Ind_{X'}^X)\cdot_{X'} M\bigr) \tenstop{X}{Y}
    \bigl((\Ind_{Y'}^Y) \cdot_{Y'} N\bigr)
\end{equation*}
and, by Propsition~\ref{prop defres and gen tens for modules} and Lemma~\ref{lem omnibus}(f) and (e), the latter is isomorphic to
\begin{align*}
  & \DefRes^{X\times Y}_{X*Y}\cdot_{X\times Y} \bigl((\Ind_{X'}^X\cdot_{X'} M)\otimes_{\kk} (\Ind_{Y'}^Y \cdot_{Y'} N)\bigr)\\ 
  \cong & \ 
  \DefRes^{X\times Y}_{X*Y}\cdot_{X\times Y} \bigl( (\Ind_{X'}^X\times\Ind_{Y'}^Y)\cdot_{X'\times Y'} (M\otimes_{\kk} N)\bigr)\\ 
  \cong & \ 
  \DefRes^{X\times Y}_{X*Y}\cdot_{X\times Y} \bigl( \Ind_{X'\times Y'}^{X\times Y} \cdot_{X'\times Y'} (M\otimes_{\kk} N)\bigr)\\ 
  \cong  & \   \bigl(\DefRes^{X\times Y}_{X*Y} \otimes_{X\times Y} \Ind_{X'\times Y'}^{X\times Y}\bigr) \cdot_{X'\times Y'}
     (M\otimes_{\kk} N)\,.
\end{align*}
Applying Lemma~\ref{lem defres ind}, Lemma~\ref{lem omnibus}(c) and  (e), and Proposition~\ref{prop defres and gen tens for modules}, the latter becomes isomorphic to the right hand side of (\ref{eqn module new formula}), since $\lexp{(x,y)}{(M\otimes_{\kk}N)}\cong \lexp{x}{M}\otimes_{\kk} \lexp{y}{N}$.
\end{proof}

As a special case of the above theorem with $X=G\times H$ and $Y=H\times K$ we recover Bouc's formula from \cite{Bouc2010b}. In fact if $h$ runs through a set of representatives of the $(p_2(X'),p_1(Y'))$-double cosets of $H$ then $((1,1),(h,1))$ runs through a set of representatives of the $((G\times H)\times_H(H\times K), (X'\times Y'))$-double cosets of $(G\times H)\times (H\times K)$. After renaming $X'$ and $Y'$ as $X$ and $Y$ we obtain the following formulation.

\begin{corollary}\label{cor Bouc module formula}
Let $X\le G\times H$, $Y\le H\times K$, $M\in\lmod{\kk X}$, and $N\in\lmod{\kk Y}$. Then one has an isomorphism
\begin{equation*}
   \Ind_X^{G\times H}(M)\otimes_{\kk H} \Ind_Y^{H\times K}(N) \cong
   \bigoplus_{h\in[p_2(X)\backslash H/p_1(Y)]} 
   \Ind_{X*\lexp{(h,1)}{Y}}^{G\times K} \bigl(M\tenstop{X}{\lexp{(h,1)}{Y}} \lexp{(h,1)}{N}\bigr)
\end{equation*}
of $(\kk G,\kk K)$-bimodules.
\end{corollary}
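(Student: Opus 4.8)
The plan is to derive Corollary~\ref{cor Bouc module formula} directly from Theorem~\ref{thm module new formula} by specializing the ambient groups and bimodules, using precisely the reindexing remark that precedes the statement in the text. First I would set, in the notation of Theorem~\ref{thm module new formula}, $X'=X\le G\times H$, $Y'=Y\le H\times K$, and then take the \emph{larger} groups to be $X_{\mathrm{big}}:=G\times H$ and $Y_{\mathrm{big}}:=H\times K$; with this choice $X_{\mathrm{big}}*Y_{\mathrm{big}}=G\times K$, while $\Ind_X^{X_{\mathrm{big}}}(M)=\Ind_X^{G\times H}(M)$ and $\Ind_Y^{Y_{\mathrm{big}}}(N)=\Ind_Y^{H\times K}(N)$. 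The extended tensor product $-\tenstop{G\times H}{H\times K}-$ over the $H$-component is, by the last sentence of \ref{noth gen biset tensor product} (and its module analogue in \ref{noth gen module tensor product}), literally the ordinary tensor product $-\otimes_{\kk H}-$, so the left-hand side of~(\ref{eqn module new formula}) becomes $\Ind_X^{G\times H}(M)\otimes_{\kk H}\Ind_Y^{H\times K}(N)$, which is exactly the left-hand side of the corollary, now viewed as a $(\kk G,\kk K)$-bimodule via $\lmod{\kk[G\times K]}\cong\lmod{\kk G}_{\kk K}$.

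Next I would unwind the index set. In Theorem~\ref{thm module new formula} the sum runs over representatives $(x,y)$ of the $((G\times H)\times_H(H\times K),\,X\times Y)$-double cosets of $(G\times H)\times(H\times K)$. The pullback $(G\times H)\times_H(H\times K)$ consists of all quadruples $((g,h),(h,k))$, so it is isomorphic to $G\times H\times K$ and, inside $(G\times H)\times(H\times K)$, the remaining freedom in a double coset is an element of $H$ acting in the two middle slots; concretely one checks that $\bigl((1,1),(h,1)\bigr)$, as $h$ ranges over a set $[p_2(X)\backslash H/p_1(Y)]$, is a complete and irredundant set of double-coset representatives. This is the verification flagged in the sentence just before the corollary. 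Plugging $(x,y)=\bigl((1,1),(h,1)\bigr)$ into the right-hand side of~(\ref{eqn module new formula}) gives $\lexp{x}{X'}=\lexp{(1,1)}{X}=X$, $\lexp{y}{Y'}=\lexp{(h,1)}{Y}$, hence $\lexp{x}{X'}*\lexp{y}{Y'}=X*\lexp{(h,1)}{Y}\le G\times K$, together with $\lexp{x}{M}=M$ and $\lexp{y}{N}=\lexp{(h,1)}{N}$; the summand becomes $\Ind_{X*\lexp{(h,1)}{Y}}^{G\times K}\bigl(M\tenstop{X}{\lexp{(h,1)}{Y}}\lexp{(h,1)}{N}\bigr)$, which is precisely the $h$-summand in the corollary. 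Reading the bimodule structure through $\lmod{\kk[G\times K]}\cong\lmod{\kk G}_{\kk K}$ finishes the identification.

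Finally I would note that naturality in $M$ and $N$, and the explicit description of the isomorphism via $m\otimes n\mapsto(x\otimes m)\otimes(y\otimes n)$, are inherited verbatim from Theorem~\ref{thm module new formula} under the above substitution, so there is nothing further to prove. The main obstacle is the purely bookkeeping step of checking that $\bigl((1,1),(h,1)\bigr)$ runs over double-coset representatives exactly as $h$ runs over $[p_2(X)\backslash H/p_1(Y)]$ and that the identifications $\lexp{(h,1)}{Y}$, $\lexp{(h,1)}{N}$ match the ordinary conjugation used in~Theorem~\ref{thm Mackey formula}; once that is settled the corollary is an immediate specialization, so the proof is short. I would simply write: ``This is the special case $X'=X$, $Y'=Y$, $X\leftarrow G\times H$, $Y\leftarrow H\times K$ of Theorem~\ref{thm module new formula}, using the reindexing of double cosets described above and the fact that $-\tenstop{G\times H}{H\times K}-$ is $-\otimes_{\kk H}-$,'' and leave the routine double-coset verification to the reader, exactly as the surrounding text does.
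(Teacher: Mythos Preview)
Your proposal is correct and follows precisely the same approach as the paper: the text immediately preceding the corollary already spells out that one takes $X=G\times H$, $Y=H\times K$ in Theorem~\ref{thm module new formula}, identifies the double-coset representatives as $((1,1),(h,1))$ for $h\in[p_2(X')\backslash H/p_1(Y')]$, and then renames $X',Y'$ as $X,Y$. There is no separate proof in the paper beyond that paragraph, and your write-up reproduces it faithfully.
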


\begin{remark}
The formal nature and similarity of the proofs of Theorem~\ref{thm biset new formula} (resp.~Theorem~\ref{thm module new formula}) from Lemma~\ref{lem defres ind} is a consequence of the existence of a functor from the obvious monoidal $2$-category (replacing the usual biset category in \cite{Bouc2010a}) to the monoidal $2$-category whose objects are the categories $\lset{G}$ (resp.~$\lmod{\kk G}$). But setting up such a general framework would have been too lengthy for the purpose of this paper.
\end{remark}


\section{Proof of Theorem~\ref{thm main}}\label{sec proof of main theorem}

Before we start with the proof of Theorem~\ref{thm main} we need some preparation. Let $(\KK,\calO, F)$ be a $p$-modular system and let $X$ be a finite group. First we formulate for convenient reference the following well-known lemma.

\begin{lemma}\label{lem vertex 1}
Let $a\in Z(\calO X)$ be a block idempotent, $A:=\calO X a$ the corresponding block algebra, and let $M\in\lmod{A}$ be indecomposable with vertex $P$.

\smallskip
{\rm (a)} $P$ is contained in a defect group of $A$, and if $M$ is $\calO$-free then $\rk_{\calO}(M)$ is divisible by $[X:P]_p$.

\smallskip
{\rm (b)} Let $M'\in\lmod{\calO N_X(P)}$ be the Green correspondent of $M$ and let $(P,d)$ be a Brauer pair such that $dM'\neq \{0\}$. Then $dM'$ is an indecomposable $\calO N_{X(P,d)}d$-module with vertex $P$ and $M'\cong \Ind_{N_X(P,d)}^{N_X(P)}(dM')$.
\end{lemma}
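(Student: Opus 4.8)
The plan is to reduce everything to standard facts about vertices, Higman's criterion, Green correspondence and Clifford theory. For the first assertion of part~(a), I would use the characterisation of a defect group $D$ of $A$ by the property that $A$, viewed as an $\calO[X\times X]$-module, is relatively $\Delta(D)$-projective. Writing $M\cong A\otimes_{\calO X}M$ and substituting $A\mid\Ind_{\Delta(D)}^{X\times X}(W)$ with $W=\Res_{\Delta(D)}^{X\times X}A$, one sees that $M$ is a direct summand of $\Ind_{\Delta(D)}^{X\times X}(W)\otimes_{\calO X}M$; by Corollary~\ref{cor Bouc module formula} (taken with $G=H=X$, $K=1$, the subgroup $\Delta(D)\le X\times X$ and $Y=X\times 1$) this tensor product is isomorphic to $\Ind_{D}^{X}$ of an $\calO D$-module, so $M$ is relatively $D$-projective and hence $Q$ is $X$-conjugate into $D$.

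For the divisibility statement I would invoke Higman's criterion: since $M$ has vertex $Q$ it is relatively $Q$-projective, so $\id_M=\mathrm{Tr}_Q^X(\pi)=\sum_{g\in[X/Q]}g\pi g^{-1}$ for some $\pi\in\End_{\calO Q}(M)$. Taking $\calO$-traces, and using that $\calO$-conjugate endomorphisms have equal trace, gives $\rk_\calO(M)=[X:Q]\cdot\tr_\calO(\pi)$. Since $\rk_\calO(M)$ and $[X:Q]$ are positive integers, $\tr_\calO(\pi)\in\QQ\cap\calO=\ZZ_{(p)}$, so $\nu_p(\rk_\calO(M))\ge\nu_p([X:Q])$, i.e.\ $[X:Q]_p$ divides $\rk_\calO(M)$.

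For part~(b) I would first note that $Q$ normalises $C_X(Q)$ and that $C_X(Q)Q/C_X(Q)\cong Q/Z(Q)$ is a $p$-group, so by the stability of blocks under normal subgroups of $p$-power index the idempotent $d\in Z(\calO C_X(Q))$ is $Q$-stable; hence $Q\le N_X(Q,d)$ and $d\in Z(\calO N_X(Q,d))$, so $\calO N_X(Q,d)d$ is a direct factor of $\calO N_X(Q,d)$. Let $f$ be the $N_X(Q)$-orbit sum of $d$, a central idempotent of $\calO N_X(Q)$. A routine Clifford-theoretic argument shows that for any $\calO N_X(Q)$-module $V$ with $dV\ne\{0\}$ one has $fV=\bigoplus_{g}{}^{g}d\cdot V$ with the (nonzero) summands transitively permuted by $N_X(Q)$ and $\mathrm{stab}_{N_X(Q)}(dV)=N_X(Q,d)$, whence $fV\cong\Ind_{N_X(Q,d)}^{N_X(Q)}(dV)$. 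Applying this with $V=M'$: since $M'$ is indecomposable and $dM'\ne\{0\}$ forces $fM'\ne\{0\}$, we obtain $M'=fM'\cong\Ind_{N_X(Q,d)}^{N_X(Q)}(dM')$, and indecomposability of $M'$ forces $dM'$ indecomposable.

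It remains to identify the vertex of $dM'$. Let $R$ be a vertex of $dM'$. As $M'=\Ind_{N_X(Q,d)}^{N_X(Q)}(dM')$ is indecomposable with vertex $Q$ and induction does not enlarge vertices, $Q$ is $N_X(Q)$-conjugate into $R$; since $N_X(Q)$ normalises $Q$ this gives $Q\le R$. Conversely, $dM'$ is a direct summand of $\Res_{N_X(Q,d)}^{N_X(Q)}M'$ (the summand of $\Res\,\Ind$ at the trivial double coset), hence of $\Res_{N_X(Q,d)}^{X}M$ because $M'\mid\Res_{N_X(Q)}^{X}M$ by Green correspondence; applying the Mackey formula to $\Res_{N_X(Q,d)}^{X}\Ind_{Q}^{X}(S)$, with $S$ a source of $M$, shows every indecomposable summand of $\Res_{N_X(Q,d)}^{X}M$ has a vertex of order at most $|Q|$, so $|R|\le|Q|$ and therefore $R=Q$. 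I expect the main obstacle to be the bookkeeping in part~(b): setting up the Clifford-theoretic orbit and stabiliser identification precisely, and correctly combining the two opposite vertex inequalities furnished by induction and by restriction; the remaining steps are direct appeals to standard results.
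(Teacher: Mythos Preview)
Your proposal is correct. For part~(a) the paper simply cites standard references (Nagao--Tsushima, Theorems~5.1.9(i) and~4.7.5), while you supply the underlying arguments explicitly; your trace argument via Higman's criterion is exactly the standard proof behind the cited divisibility statement.

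For part~(b) the two arguments are close but packaged differently. The paper first identifies the block $d'$ of $\calO N_X(Q)$ to which $M'$ belongs, notes $d'\in\calO C_X(Q)$ since $Q\trianglelefteq N_X(Q)$, and then quotes the Morita equivalence between $\calO N_X(Q)d'$ and $\calO N_X(Q,d)d$ given by the bimodules $d\calO N_X(Q)$ and $\calO N_X(Q)d$ (Linckelmann, Theorem~6.2.6(iii)); this equivalence is realised by $d\cdot\Res$ and $\Ind$, so indecomposability and the isomorphism $M'\cong\Ind_{N_X(Q,d)}^{N_X(Q)}(dM')$ follow at once. You instead carry out the underlying Clifford theory by hand via the orbit sum $f$ of $d$, which is precisely what that Morita equivalence encodes. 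For the vertex, the paper argues locally: from $M'=\Ind_{N_X(Q,d)}^{N_X(Q)}(dM')$ one gets $Q\le R$, and from $dM'\mid\Res^{N_X(Q)}_{N_X(Q,d)}(M')$ (with $M'$ having vertex $Q$) one gets $R\le Q$ directly. Your route for $R\le Q$ is a bit longer, passing up to $X$ and invoking Mackey on $\Res^{X}_{N_X(Q,d)}\Ind_Q^X(S)$; this works, but the paper's observation that $dM'$ is already a summand of $\Res^{N_X(Q)}_{N_X(Q,d)}(M')$ gives relative $Q$-projectivity immediately without leaving $N_X(Q)$.
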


\begin{proof}
(a) See \cite[Theorems~5.1.9(i) and 4.7.5]{NagaoTsushima1989}.

\smallskip
(b) Let $d'$ be the block idempotent of $\calO N_X(P)$ to which $M'$ belongs. Since $P$ is normal in $N_X(P)$, $d'$ is contained in $\calO C_X(P)$ (see \cite[Theorem~6.2.6(ii)]{Linckelmann2018}) and $dd'=d$, since $\{0\}\neq dM=dd'M$ implies $dd'\neq0$. 
 Let $I:=N_X(P,d)$ denote the stabilizer of $(P,d)$. Then $dM'$ is an $\calO Id$-module. The $(\calO Id,\calO N_X(P)d')$-bimodule $d\calO N_X(P)d'=d\calO N_X(P)$ and the $(\calO N_X(P)d',\calO Id)$-bimodule $d'\calO N_X(P)d=\calO N_X(P)d$ induce mutually inverse Morita equivalences between $\lmod{\calO N_X(Q)d'}$ and $\lmod{\calO Id}$ (see \cite[Theorem~6.2.6(iii)]{Linckelmann2018}). Moreover, these functors are naturally isomorphic to $d\cdot\Res^{N_X(P)}_I$ and $\Ind_I^{N_X(P)}$, respectively. Since $M'$ is indecomposable, so is its image $dM'=d\cdot\Res^{N_X(P)}_I(M')$ under the Morita equivalence. Moreover, if $Q$ is a vertex of $M'$ then $M'=\Ind_I^{N_X(P)}(dM')$ implies that $P\le Q$ and $dM'\mid\Res^{N_X(P)}_I(M')$ implies that $Q\le P$.
\end{proof}

\begin{nothing}\label{noth pperm and Brauer construction} We recall some facts about $p$-permutation-modules and the Brauer construction (see~\cite[Section~3]{BP2020} for more details).  $M\in\lmod{\calO X}$ (resp.~$M\in\lmod{FX}$) is called a {\em $p$-permutation} module if it is isomorphic to a direct summand of a permutation module. We denote the Grothendieck group of the category of $p$-permutation $\calO X$-modules with respect to split exact sequences by $T(\calO X)$. If $a\in Z(\calO X)$, we similarly define $T(\calO Xa)$, $T(FX)$, and $T(FX\bar{a})$. The functor $F\otimes_{\calO}-$ induces an isomorphism $T(\calO Xa)\myiso T(FX\bar{a})$, $[M]\mapsto [\overline{M}]$, preserving indecomposablility and vertices. The {\em Brauer construction} with respect to a $p$-subgroup $P\le X$ is a functor $-(P)\colon\lmod{\calO X}\to\lmod{FN_X(P)}$ that takes $p$-permutation $\calO Xa$-modules to $p$-permutation $FN_X(P)\br_P(a)$-modules and defines a group homomorphism $-(P)\colon T(\calO Xa)\to T(FN_G(P)\br_X(a))$. If $M$ is an indecomposable $p$-permutation $\calO X$-module with vertex $P$, then $M(P)$ and the Green correspondent $M'$ of $M$ are related via $\overline{M'}\cong M(P)$. For $\omega\in T(\calO G)$ and a Brauer pair $(P,e)$ of $\calO X$, we write (as in \cite{BP2020}) $\omegabar(P,e)\in T(F[N_X(P,e)]\ebar)$ for the element obtained by first applying $-(P)$ and then multiplying with the idempotent $\ebar$, and by $\omega(P,e)$ we denote the corresponding element in $T(\calO [N_X(P,e)]e)$. We call $(P,e)$ an {\em $\omega$-Brauer pair} if $\omega(P,e)\neq 0$ in $T(\calO [N_X(P,e)]e)$.
\end{nothing}

For the proof of Theorem~\ref{thm main} and the rest of this section, we fix again finite groups $G$ and $H$, and assume that $\calO$ has a root of unity of order $\exp(G\times H)$ as in Section~\ref{sec intro}. Furthermore, we fix block idempotents $b\in Z(\calO G)$ and $c\in Z(\calO H)$, and a $p$-permutation equivalence $\gamma\in T^\Delta(B,C)$ between the block algebras $B:=\calO Gb$ and $C:=\calO Hc$. Finally, we fix a maximal $\gamma$-Brauer pair (viewing $\gamma$ as an element in $T(\calO[G\times H])$). By \cite[Remark~10.2 and Theorem~10.11]{BP2020} it is of the form $(\Delta(D,\phi,E),e\otimes f^*)$, for a maximal $B$-Brauer pair $(D,e)$ and a maximal $C$-Brauer pair $(E,f)$. Thus, $D$ is a defect group of $B$ and $E$ is a defect group of $C$. Here, we write $-^*\colon \calO X\to \calO X$ for the map defined by $x\mapsto x^{-1}$, for $x\in X$. Note that this makes sense, since $C_{G\times H}(\Delta(D,\phi,E))= C_G(D)\times C_H(E)$. Finally, we set $I:=N_G(D,e)$ and $J:=N_H(E,f)$.

\begin{lemma}\label{lem vertex 2}
Let $M\in\lmod{\calO G}_{\calO H}$ be indecomposable with vertex $X\le \Delta(D,\phi,E)$, and let $L\in\lmod{\calO H}$ be indecomposable with vertex $Y\le E$. If $X<\Delta(D,\phi,E)$ or $Y< E$ then every indecomposable direct summand of $M\otimes_{\calO H} L\in\lmod{\calO G}$ has a vertex strictly contained in $D$. If additionally $M$ and $L$ are $\calO$-free then also $M\otimes_{\calO H}L$ is $\calO$-free and its rank is divisible by $p\cdot[G:D]_p$.
\end{lemma}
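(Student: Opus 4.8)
\textbf{Plan for the proof of Lemma~\ref{lem vertex 2}.}
The goal is to control the vertices of the indecomposable summands of $M\otimes_{\calO H}L$ when at least one of the vertices $X$ of $M$ and $Y$ of $L$ is proper inside its maximal candidate. The plan is to use Corollary~\ref{cor Bouc module formula} to decompose $M\otimes_{\calO H}L$ as a direct sum of modules of the form $\Ind_{X*\lexp{(h,1)}{Y'}}^{G}\bigl(M'\tenstop{X}{\lexp{(h,1)}{Y'}}\lexp{(h,1)}{L'}\bigr)$, where $Y'\le E\times 1\le H\times 1$ is a vertex of $L$ viewed as an $\calO[H\times 1]$-module. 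First I would set up notation so that Corollary~\ref{cor Bouc module formula} applies: write $M$ as a summand of $\Ind_X^{G\times H}(M_0)$ for some indecomposable $\calO X$-module $M_0$ (a source of $M$), and $L$ as a summand of $\Ind_{Y'}^{H\times 1}(L_0)$, with $Y'=\{(y,1)\mid y\in Y\}$. Then every indecomposable summand of $M\otimes_{\calO H}L$ is, up to conjugacy, a summand of some $\Ind_{X*\lexp{(h,1)}{Y'}}^{G\times 1}\bigl(\cdots\bigr)$, hence has a vertex contained in a $G$-conjugate of $p_1\bigl(X*\lexp{(h,1)}{Y'}\bigr)\le p_1(X)\le D$ (using \ref{noth subgroups of direct products}). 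This already gives that every vertex is $G$-subconjugate to $D$, which is the easy half.

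The substantive point is the \emph{strict} containment. Here I would argue that if $X*\lexp{(h,1)}{Y'}=\Delta(D',\phi',E')$ were to have $p_1$-image all of $D$ (up to conjugacy), then by the inequalities $k_1(X)\le k_1(X*\lexp{(h,1)}{Y'})$ and $p_1(X*\lexp{(h,1)}{Y'})\le p_1(X)$, one would be forced to have $p_1(X)=D$ up to conjugacy and $k_1(X)=1$, i.e.\ $X$ would be a twisted diagonal subgroup with $p_1(X)$ a defect group; then $X\le\Delta(D,\phi,E)$ together with $|X|=|p_1(X)|=|D|=|\Delta(D,\phi,E)|$ forces $X=\Delta(D,\phi,E)$, contradicting $X<\Delta(D,\phi,E)$. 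A parallel argument, using the second projection and the inequality $p_2(X*\lexp{(h,1)}{Y'})\le p_2(\lexp{(h,1)}{Y'})$, rules out the case $Y<E$: one would need $p_2(\lexp{(h,1)}{Y'})=E$ up to conjugacy, forcing $Y=E$. Combining these, whenever $X<\Delta(D,\phi,E)$ or $Y<E$, every $X*\lexp{(h,1)}{Y'}$ has $p_1$-image properly contained in $D$ up to $G$-conjugacy, so by Lemma~\ref{lem vertex 1}(a) every indecomposable summand of $M\otimes_{\calO H}L$ has vertex strictly inside $D$.

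For the rank statement, assuming $M$ and $L$ are $\calO$-free: $\calO$-freeness of $M\otimes_{\calO H}L$ is immediate since tensoring over the group algebra of an $\calO$-free module that is $\calO[H]$-projective-as-right-module preserves $\calO$-freeness (this is essentially Lemma~\ref{lem scalar extension}(a) in the present module setting, and $M$ is a $p$-permutation bimodule hence projective as right $\calO H$-module). For the divisibility, each indecomposable $\calO$-free summand $M_i$ of $M\otimes_{\calO H}L$ has a vertex $Q_i$ strictly contained in $D$, and by Lemma~\ref{lem vertex 1}(a) applied to the group $G$ we get $[G:Q_i]_p\mid\rk_\calO(M_i)$. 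Since $Q_i<D$ and $D$ is a defect group (a $p$-subgroup), $[D:Q_i]$ is divisible by $p$, hence $[G:Q_i]_p=[G:D]_p\cdot[D:Q_i]_p$ is divisible by $p\cdot[G:D]_p$; summing over $i$ gives that $\rk_\calO(M\otimes_{\calO H}L)$ is divisible by $p\cdot[G:D]_p$.

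\textbf{Main obstacle.} The delicate part is the bookkeeping in the second paragraph: one must keep careful track of how conjugating $Y$ by $(h,1)$ moves it inside $H\times 1$, pass correctly between the biset $\lexp{(h,1)}{Y'}$ and the subgroup $\Delta(D,\phi,E)$, and verify that the order/containment squeeze genuinely forces equality of twisted diagonal subgroups rather than merely conjugacy of projections. One should also be slightly careful that ``strictly contained in $D$'' is meant up to $G$-conjugacy, which is all that is needed since vertices are only defined up to conjugacy; this is exactly what Lemma~\ref{lem vertex 1}(a) is robust against. I expect the $\calO$-freeness claim and the divisibility count to be routine once the vertex statement is in hand.
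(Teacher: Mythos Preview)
Your overall plan matches the paper's: apply Corollary~\ref{cor Bouc module formula}, bound each vertex by the subgroup $X*\lexp{h}{Y}$, and finish with Lemma~\ref{lem vertex 1}(a). The case $X<\Delta(D,\phi,E)$ is handled correctly, if somewhat circuitously.

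The treatment of the case $Y<E$, however, has a genuine gap. With $K=\{1\}$ one has $Y'\le H\times\{1\}$, so $p_2(\lexp{(h,1)}{Y'})=\{1\}$ and the inequality $p_2(X*\lexp{(h,1)}{Y'})\le p_2(\lexp{(h,1)}{Y'})$ is vacuous; it cannot force anything about $Y$. The general inequalities of \ref{noth subgroups of direct products} give no upper bound on $|X*Y|$ in terms of $|p_1(Y)|$ without further hypotheses. What actually makes this case work is the twisted-diagonal structure of $X$: any subgroup of $\Delta(D,\phi,E)$ has the form $X=\Delta(\phi(E_0),\phi|_{E_0},E_0)$ for some $E_0\le E$, and a direct computation yields $X*\lexp{h}{Y}=\phi(E_0\cap\lexp{h}{Y})$, so $|X*\lexp{h}{Y}|\le\min(|E_0|,|Y|)<|D|$ whenever $E_0<E$ or $Y<E$. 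This is the ``straightforward verification'' the paper refers to; your projection-squeeze argument does not capture it.

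A smaller issue: for $\calO$-freeness you assert that $M$ is a $p$-permutation bimodule, but the lemma does not assume this. The paper instead uses that $k(X,\lexp{h}{Y})=k_2(X)\cap\lexp{h}{Y}=\{1\}$ (again because $X$ is twisted diagonal), so the extended tensor product in each Bouc summand is an ordinary tensor product over $\calO$ of two $\calO$-free modules, hence $\calO$-free; induction preserves this, and $M\otimes_{\calO H}L$ inherits $\calO$-freeness as a direct summand. Once that is in place, your rank-divisibility count via $[G:Q_i]_p=[G:D]_p\cdot[D:Q_i]_p$ is exactly right.
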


\begin{proof}
By Corollary~\ref{cor Bouc module formula}, each indecomposable direct summand $N$ of $M\otimes_{\calO H}L$ satisfies
\begin{equation*}
   N\mid 
   \Ind_{X*\lexp{h}{Y}}^G  \bigl( \Res^{G\times H}_X(M)\tens{X}{\lexp{h}{Y}}{\calO k(X,\lexp{h}{Y})} \Res^H_{\lexp{h}Y}(L) \bigr)
\end{equation*}
for some $h\in H$, since $M\mid \Ind_X^{G\times H}(\Res^{G\times H}_X(M))$ and $L\mid \Ind_Y^H(\Res^H_Y(L))$. It is straightforward to verify that if $X<\Delta(D,\phi,E)$ or $Y<E$ then $X*\lexp{h}{Y}<D$, so that $N$ has a vertex properly contained in $D$. 
If $M$ and $L$ are $\calO$-free then so is $\Res^{G\times H}_X(M)\tenstop{X}{\lexp{h}{Y}} \Res^H_{\lexp{h}Y}(L)$, since $k(X,\lexp{h}{Y})=\{1\}$. 
Thus, $N$ is $\calO$-free of $\calO$-rank divisible by $p\cdot[G:D]_p$ (see Lemma~\ref{lem vertex 1}(a)). The result now follows.
\end{proof}

We will prove Theorem~\ref{thm main} in four steps.

\bigskip
{\sc Step 1.}\quad Let $\psi\in\Irr(\KK Hc)$ be an irreducible character of height zero. Then 
\begin{equation}\label{eqn psi(1) a}
   \psi(1)_p=[H:E]_p \quad \text{and} \quad \psi(1)_{p'}=[H:E]_p^{-1}\cdot \psi(1)\,.
\end{equation}
Let $L$ be an $OHc$-lattice with character $\psi$. Then $L$ is indecomposable and Lemma~\ref{lem vertex 1}(a) implies that $E$ is a vertex of $L$. Let $L'$ be the Green correspondent of $L$. Then $\Ind_{N_H(E)}^H (L') \cong L \oplus \Ltilde$ for some $\calO H$-lattice $\Ltilde$ whose rank is divisible by $p\cdot[H:E]_p$ by Lemma~\ref{lem vertex 1}(a). Thus, with (\ref{eqn psi(1) a}) we have
\begin{equation}\label{eqn psi(1) b}
   \psi(1)_{p'} = [H:E]_p^{-1}\cdot \rk_\calO(L) \equiv [H:E]_p^{-1} \cdot \rk_\calO (\Ind_{N_H(E)}^H(L')) \mod p\,.
\end{equation}
Let $c'\in Z(\calO N_H(E))$ be the block idempotent corresponding to $c$ via Brauer's first main theorem, i.e., $\br_E(c)=\overline{c'}$. By \cite[Corollary~5.3.11]{NagaoTsushima1989}, $c'$ acts as the identity on $L'$. Since $(E,f)$ is a $C$-Brauer pair,  $\overline{fc'}=\fbar\br_e(c)\neq 0$ and therefore $fc'\neq 0$. Moreover, $c'$ is the sum of the distinct $N_H(E)$-conjugates of $f$ (see \cite[Theorem~6.2.6(iii)]{Linckelmann2018}). Thus, $c'L\neq\{0\}$ implies $fL'\neq\{0\}$. 
Now Lemma~\ref{lem vertex 1}(b) implies that $L'=f\cdot\Ind^{N_H(E)}_J (L'')$ for the indecomposable $\calO Jf$-module $L'':=fL'$. With (\ref{eqn psi(1) b}) we obtain
\begin{equation*}
   \psi(1)_{p'} \equiv [H:E]_p^{-1} \cdot \rk_\calO (\Ind_J^H(L'')) = \frac{[H:J]}{[H:E]_p} \cdot \rk_\calO(L'') = 
   \frac{[H:J]_{p'}}{[J:E]_p} \cdot \rk_\calO (L'') \mod p\,.
\end{equation*}
Since the left hand side of this congruence is not divisible by $p$, we have $\rk_\calO(L'')_p=[J:E]_p$ and
\begin{equation}\label{eqn psi(1) c}
   \psi(1)_{p'}\equiv [H:J]_{p'}\cdot \rk_{\calO}(L'')_{p'} \mod p\,.
\end{equation}

\smallskip
{\sc Step 2.}\quad Next, let $\mu:=\kappa(\gamma)\in R(\KK Gb, \KK Hc)$ be as in the introduction and set 
\begin{equation*}
   \chi:=I(\mu)(\psi)= \mu\cdotH \psi\,.
\end{equation*} 
Since $\mu$ is a perfect isometry (see \cite[Proposition~9.9]{BP2020}), we have $\chi\in \pm \Irr(\KK Gb)$. The goal of this step is to determine the congruence class of $\chi(1)_{p'}$ modulo $p$ in terms of local data. First recall that the perfect isometry $\mu$ preserves heights (see \cite[Lemme~1.6]{Broue1990}) so that 
\begin{equation}\label{eqn chi(1) a}
   \chi(1)_p= [G:D]_p \quad \text{and} \quad \chi(1)_{p'} = [G:D]_p^{-1}\cdot \chi(1) =  [G:D]_p^{-1}\cdot (\mu\cdotH \psi)(1)\,.
\end{equation}
By \cite[Theorems~14.1, 14.3, and 10.11]{BP2020} we can write
\begin{equation*}
   \gamma = \varepsilon \cdot [M] + \sum_{i=1}^r n_i \cdot [M_i]\,,
\end{equation*}
with $\varepsilon:=\varepsilon(\gamma)\in\{\pm 1\}$ the sign of $\gamma$, integers $n_1,\ldots, n_r$, an indecomposable $(B,C)$-bimodule $M$ with vertex $\Delta(D,\phi, E)$ and indecomposable $(B,C)$-bimodules $M_i$, $i=1,\ldots,r$, each of which has a vertex strictly contained in $\Delta(D,\phi,E)$.  
By Lemma~\ref{lem scalar extension}(a),  $M\otimes_{\calO H} L$ and $M_i \otimes_{\calO H} L$ are $\calO$-free with 
\begin{equation}\label{eqn chi(1) b}
   (\mu\cdotH\psi)(1) = \varepsilon\cdot\rk_\calO\bigl(M\otimes_{\calO H} L\bigr) + 
   \sum_{i=1}^r n_i\cdot\rk_\calO\bigl(M_i \otimes_{\calO H} L\bigr)\,.
\end{equation}
Moreover, since $M\otimes_{\calO H} L$ and $M_i \otimes_{\calO H} L$ are $\calO Gb$-lattices, their $\calO$-ranks are divisible by $[G:D]_p$ (see Lemma~\ref{lem vertex 1}), and by Lemma \ref{lem vertex 2}, the $\calO$-rank of $M_i \otimes_{\calO H} L$, $i=1,\ldots,r$, is divisible by $p\cdot[G:D]_p$. Therefore,
\begin{equation*}
   [G:D]_p^{-1} \cdot \rk_\calO(M_i\otimes_{\calO H}L) \equiv 0 \mod p
\end{equation*}
for all $i=1,\ldots, r$. Together with (\ref{eqn chi(1) a}) and (\ref{eqn chi(1) b}) this implies
\begin{equation}\label{eqn chi(1) c}
   \chi(1)_{p'} \equiv \varepsilon\cdot [G:D]_p^{-1}\cdot \rk_{\calO}(M\otimes_{\calO H} L) \mod p\,.
\end{equation}
With $L''\in\lmod{\calO Jf}$ and $L'\cong\Ind_J^{N_H(E)}(L'')$ as in Step~1, we can write $N_{N_H(E)}^H(L') \cong L\oplus \Ltilde$, with each indecomposable direct summand of $\Ltilde$ having a vertex $Q$ strictly contained in $E$. By Lemma~\ref{lem vertex 2}, $p\cdot[G:D]_p$ divides $\rk_\calO(M\otimes_{\calO}\Ltilde)$. Thus, with (\ref{eqn chi(1) c}) we obtain
\begin{equation}\label{eqn chi(1) d}
   \chi(1)_{p'}\equiv \varepsilon \cdot [G:D]_p^{-1}\cdot \rk_{\calO}\bigl(M\otimes_{\calO H}\Ind_J^H(L'')\bigr) \mod p\,.
\end{equation}
Set $Y':=N_{G\times H}(\Delta(D,\phi,E))$ and $Y'':=N_{G\times H}(\Delta(D,\phi,E),e\otimes f^*)\le Y'$ and let $M'\in\lmod{\calO Y'}$ be the Green correspondent of $M$. Then $F\otimes_\calO M'= M(\Delta(D,\phi,E))$, since $M$ is a $p$-permutation module with vertex $\Delta(D,\phi,E)$. Since $M_1,\ldots,M_r$ have vertices strictly contained in $\Delta(D,\phi,E)$, we have $0\neq(e\otimes f^*)\cdot\gamma(\Delta(D,\phi,E)) = [(e\otimes f^*)M']$, and Lemma~\ref{lem vertex 1}(b) implies that $M'=\Ind_{Y''}^{Y'}(M'')$ for the indecomposable $\calO Y''(e\otimes f^*)$-module $M'':=(e\otimes f^*)M'$ with vertex $\Delta(D,\phi,E)$. We have $\Ind_{Y'}^{G\times H}(M')\cong M\oplus \Mtilde$ for some $\calO [G\times H]$-lattice $\Mtilde$, each of whose indecomposable direct summands have a vertex strictly contained in $\Delta(D,\phi,E)$. Lemma~\ref{lem vertex 2} implies that $p\cdot[G:D]_p$ divides $\rk_\calO\bigl(\Mtilde\otimes_{\calO H}\Ind_J^H(L'')\bigr)$ and with (\ref{eqn chi(1) d}) we obtain
\begin{equation}\label{eqn chi(1) e}
   \chi(1)_{p'}\equiv \varepsilon \cdot [G:D]_p^{-1}\cdot 
   \rk_\calO\bigl(\Ind_{Y''}^{G\times H}(M'')\otimes_{\calO H} \Ind_J^H(L'')\bigr) \mod p\,.
\end{equation}
By \cite[Proposition~11.1]{BP2020}, we have 
\begin{equation}\label{eqn k p Y}
  p_1(Y'')=I\,,\quad p_2(Y'')=J\,,\quad k_1(Y'')=C_G(D)\,, \quad\text{and}\quad k_2(Y'')=C_H(E)\,.
\end{equation}
Since $p_2(Y'')=J$, Corollary~\ref{cor Bouc module formula} (with $K=\{1\}$) implies that 
\begin{equation}\label{eqn chi(1) f}
   \Ind_{Y''}^{G\times H}(\Mtilde)\otimes_{\calO H}\Ind_J^H(L'')\cong 
   \bigoplus_{h\in[J\backslash H/J]} \Ind_{Y''*\hJ}^G\bigl(M''\tenstop{Y''}{\hJ} \lexp{h}{L''}\bigr)\,.
\end{equation}
We study the direct summands in (\ref{eqn chi(1) f}). 

Case (i): Suppose $h\in H$ but $h\notin N_H(E)$. Let $S\in\lmod{\calO \hE}$ be a source of $\lexp{h}{L''}$. Since $M''$ has trivial source, we have
\begin{equation*}
   M''\tenstop{Y''}{\hJ} \lexp{h}{L''}\quad \mid \quad 
   \Ind_{\Delta(D,\phi,E)}^{Y''}(\calO)\tenstop{Y''}{\hJ}\Ind_{\hE}^{\hJ}(S)\,.
\end{equation*}
Moreover, Theorem~\ref{thm module new formula} (with $K=\{1\}$) and Lemma~\ref{lem scalar extension}(a) imply that the latter module is a direct sum of $\calO[Y''*\hJ]$-lattices that are induced from subgroups of the form $\lexp{(g,h_1)}{\Delta(D,\phi,E)}*\lexp{h_2h}{E}$ with $(g,h_1)\in Y''$ and $h_2\in\hJ$. Since $p_2(Y'')=J\le N_H(E)$, we obtain $p_2(\lexp{(g,h_1)}{\Delta(D,\phi,E)})=\lexp{h_1}{E}=E$. Since $\lexp{h_2h}{E}=\hE$ and since $E\cap \hE<E$ by the choice of $h$, the group $\lexp{(g,h_1)}{\Delta(D,\phi,E)}*\lexp{h_2h}{E}$ is properly contained in $D$. Thus, by Lemma~\ref{lem vertex 1}(a), we obtain
\begin{equation*}
   [G:D]_p^{-1}\cdot \rk_\calO\bigl(\Ind_{Y''*\hJ}^G\bigl(M''\tenstop{Y''}{\hJ} \lexp{h}{L''}\bigr)\bigr) \equiv 0 \mod p
\end{equation*}
in this case.

Case (ii): Suppose $h\in N_H(E)$ but $h\notin J$. We claim that in this case $M''\tenstop{Y''}{\hJ} \lexp{h}{L''}=\{0\}$. In fact, since $k_1(Y'')=C_G(D)$, $k_2(Y'')=C_H(E)\le \hJ$ (see (\ref{eqn k p Y})), we have $k(Y'',\hJ)= C_H(E)$. Thus, by the definition of $-\tenstop{Y''}{\hJ}-$, we have 
\begin{equation*}
   \Res^{Y''*\hJ}_{C_G(D)}\bigl(M''\tenstop{Y''}{\hJ}\lexp{h}{L''}\bigr) = 
   \Res^{Y''}_{C_G(D)\times C_H(E)}(M'')\otimes_{\calO C_H(E)} \Res^{\hJ}_{C_H(E)}(\lexp{h}{L''})\,.
\end{equation*}
Since the block idempotent $f$ of $\calO C_H(E)$ acts as the identity from the right on $\Res^{Y''}_{C_G(D)\times C_H(E)}(M'')=eM'f$, the block idempotent $\lexp{h}{f}$ acts as the identity from the left on $\Res^{\hJ}_{C_H(Q)}(\lexp{h}{L''})=\lexp{h}{f}\cdot L'$, and since $f\cdot\lexp{h}{f}=0$, the claim is proved.

With the conclusions for Case (i) and Case (ii), (\ref{eqn chi(1) e}) and (\ref{eqn chi(1) f}) imply that
\begin{equation*}
   \chi(1)_{p'} \equiv \varepsilon \cdot [G:D]_p^{-1} \cdot \rk_\calO \bigl(\Ind_{Y''*J}^G(M''\tenstop{Y''}{J} L'')\bigr) \mod p\,,
\end{equation*}
with $Y''*J=I$, since $p_1(Y'')=I$ and $p_2(Y'')=J$ (see (\ref{eqn k p Y})). Thus,
\begin{equation*}
   \chi(1)_{p'} \equiv \varepsilon \cdot \frac{[G:I]}{[G:D]_p} \cdot \rk_\calO(M''\tenstop{Y''}{J} L'') 
   = \varepsilon \cdot \frac{[G:I]_{p'}}{[I:D]_p} \cdot \rk_\calO(M''\tenstop{Y''}{J} L'')\mod p\,.
\end{equation*}
Since the left hand side of this congruence is not divisible by $p$, we have $\rk_\calO(M''\tenstop{Y''}{J} L'')_p=[I:D]_p$ and
\begin{equation}\label{eqn chi(1) g}
   \chi(1)_{p'} \equiv \varepsilon \cdot [G:I]_{p'} \cdot \rk_\calO(M''\tenstop{Y''}{J} L'')_{p'} \mod p\,.
\end{equation}

\smallskip
{\sc Step 3.}\quad Let $V$ be the unique simple $FC_G(D)e$-module and let $W$ be the unique simple $FC_H(E)f$-module as defined in the paragraph preceding Theorem~\ref{thm main}. We claim that
\begin{equation}\label{eqn V W ratio}
   \frac{\rk_\calO(M''\tenstop{Y''}{J} L'')}{\rk_{\calO}(L'')} = \frac{\dim_F(V)}{\dim_F(W)}\,.
\end{equation}
By~\cite[Proposition~14.4]{BP2020} with $S=C_G(D)$ and $T=C_H(E)$, the $(\calO C_G(D)e,\calO C_H(E)f)$-bimodule $M''':=\Res^{Y''}_{C_G(D)\times C_H(E)}(M'')$ induces a Morita equivalence between the block algebras $\calO C_G(D)e$ and $\calO C_H(E)f$. 
Therefore, the $(FC_G(D)\ebar,FC_H(E)\fbar)$-bimodule $\overline{M'''}:=F\otimes_{\calO}M'''$ induces a Morita equivalence between the block algebras $FC_G(D)\ebar$ and $FC_H(E)\fbar$. This implies that 
\begin{equation*}
   \overline{M'''}\otimes_{FC_H(E)} W \cong V\,,
\end{equation*}
since $V$ and $W$ are the unique simple modules in these block algebras. Moreover, the multiplicity of $W$ as composition factor in $\overline{L''}:=F\otimes_{\calO} L''$ is equal to the multiplicity of $V$ as composition factor in $\overline{M'''}\otimes_{FC_H(E)} \overline{L''}$. Thus,
\begin{equation*}
   \frac{\dim_F(\overline{M'''}\otimes_{FC_H(E)}\overline{L''})}{\dim_F(\overline{L''})} = \frac{\dim_F(V)}{\dim_F(W)}
\end{equation*}
and since $\dim_F(\overline{M'''}\otimes_{FC_H(E)}\overline{L''}) = \rk_\calO(M''\tenstop{Y''}{J} L'')$ and $\dim_F(\overline{L''}) = \rk_\calO(L'')$ by~Lemma~\ref{lem scalar extension}(b), Equation~(\ref{eqn V W ratio}) holds.

\smallskip
{\sc Step 4.}\quad Since we have an isomorphism $\phi\colon E\myiso D$, we obtain $|Z(D)|=|Z(E)|$. Moroever, since the fractions in (\ref{eqn Broue quotient}) and $b(B)$ and $b(C)$ are units in $\ZZ_{(p)}$, it suffices to show that 
\begin{equation}\label{eqn final a}
   \bigl(|G|\cdot\psi(1)\cdot |C_H(E)|\cdot\dim_F(V)\bigr)_{p'} \equiv \varepsilon\cdot \bigl(|H|\cdot\chi(1)\cdot |C_G(D)|\cdot \dim_F(W)\bigr)_{p'} \mod p\,.
\end{equation}
Using (\ref{eqn psi(1) c}) and (\ref{eqn V W ratio}), the left hand side of (\ref{eqn final a}) is congruent to
\begin{align}\label{eqn final b}
  &  |G|_{p'} \cdot [H:J]_{p'} \cdot \rk_\calO(L'')_{p'} \cdot |C_H(E)|_{p'} \cdot \dim_F(V)_{p'}\notag\\
   = \ & |G|_{p'} \cdot [H:J]_{p'} \cdot |C_H(E)|_{p'} \cdot \rk_\calO(M''\tenstop{Y''}{J} L'')_{p'} \cdot 
          \dim_F(W)_{p'}
\end{align}
modulo $p$. Using (\ref{eqn chi(1) g}), the right hand side of (\ref{eqn final a}) is congruent to
\begin{equation}\label{eqn final c}
   \varepsilon \cdot |H|_{p'} \cdot \varepsilon \cdot [G:I]_{p'} \cdot \rk_\calO(M''\tenstop{Y''}{J} L'')_{p'} 
   \cdot |C_G(D)|_{p'} \cdot \dim_F(W)_{p'}
\end{equation}
modulo $p$. But since $[I:C_G(D)]=[J:C_H(E)]$ by \cite[Proposition~11.1]{BP2020}, the integers in (\ref{eqn final b}) and (\ref{eqn final c}) are equal. This proves the congruence in (\ref{eqn final a}) and completes the proof of Theorem~\ref{thm main}.
\qed


\section{Proof of Theorem~\ref{thm app}}\label{sec application}

\begin{proof} {\em of Theorem~\ref{thm app}(a).}\quad
Let $\gamma\in T^\Delta(B,C)$ be a $p$-permutation equivalence between $B$ and $C$. Then $\mu:=\kappa_{G\times H}(\gamma)\in R(KGb,KHc)$ is a perfect isometry. Let $\alpha\colon \Irr(KHc)\myiso \Irr(KGb)$ be the bijection induced by the perfect isometry $I_\mu$. Since the quotient in (\ref{eqn Broue quotient}) is a unit in $\ZZ_{(p)}$, the bijection $\alpha$ preserves heights. Moreover, since $\beta(\mu)\in\{\pm 1\}$, we obtain $r(\alpha(\chi))\equiv \pm r(\chi)\mod p$ for all $\chi\in\Irr(KHc)$.
\end{proof}

\begin{proof} {\em of Theorem~\ref{thm app}(b).}\quad Block idempotents of $\calO G$ and $\calO H$ have coefficients in $\ZZ_p[\zeta']$, where $\zeta'$ is the $\exp(G\times H)_{p'}$-th power of $\zeta$. Moreover, the natural map between $\Gal(\QQ_p(\zeta')/\QQ_p)$ and the Galois group of the residue field of $\QQ_p(\zeta')$ over $\FF_p$ is an isomorphism. Therefore, \cite[Theorem~4.2]{BoltjeYilmaz2021} implies that $\Gamma_b=\Gamma_c$.

\smallskip
Let $\gammatilde\in T^\Delta(\ZZ_pG\btilde,\ZZ_pH\ctilde)$ with $\gammatilde\cdot_H\gammatilde^\circ=[\ZZ_pG\btilde]\in T^\Delta(\ZZ_pG\btilde,\ZZ_pG\btilde)$ and $\gammatilde^\circ\circ_G\gammatilde = [\ZZ_pH\ctilde]\in T^\Delta(\ZZ_pH\ctilde,\ZZ_pH\ctilde)$ and let $\gamma\in T^\Delta(\calO G\btilde,\calO H\ctilde)$ be the image of $\gammatilde$ under the natural map induced by scalar extension from $\ZZ_p$ to $\calO$. 
Then $\gamma\cdot_H\gamma^\circ=[\calO G\btilde]$ and $\gamma^\circ\cdot_G\gamma=[\calO H\ctilde]$.
Set $\mu:=\kappa_{G\times H}(\gamma)$ and let $\alpha\colon\Irr(KH\ctilde)\myiso\Irr(KG\btilde)$ be the bijection induced by the perfect isometry $I_\mu=\mu\cdot_H -\colon R(KH\ctilde)\to R(KG\btilde)$.

\smallskip
For fixed $r\in\{1,\ldots,p-1\}$, the bijection $\alpha$ restricts to a bijection
\begin{equation*}
   \alpha\colon \Irr_0(KH\ctilde,r)\myiso\Irr_0(KG\btilde,r)\,,
\end{equation*}
by \cite[Th\'eor\`eme~1.5(2)]{Broue1990}. Moreover, since $\mu$ is the scalar extension from $\QQ_p$ to $K$ of the element $\kappa(\gammatilde)\in R(\QQ_pG\btilde,\QQ_pH\ctilde)$, $\alpha$ is $\Gamma$-equivariant. Therefore,
\begin{equation}\label{eqn Delta 1}
   |\Irr_0(KG\btilde,r)^\Delta| = |\Irr_0(KH\ctilde,r)^\Delta|
\end{equation}
for all subgroups $\Delta\in\Gamma_b$.

Let $b_1,\cdots,b_n$ (resp. $c_1,\ldots, c_n$) denote the elements of the $\Gamma$-orbit of $b$ (resp.~c). Note that they have the same length, since $\Gamma_b=\Gamma_c$. Then $\btilde=b_1+\cdots+b_n$ and $\ctilde=c_1+\cdots+c_n$. For fixed $r\in\{1,\ldots,p-1\}$ the finite $\Gamma$-set $\Irr_0(KG\btilde,r)$ is the disjoint union of the subsets $\Irr_0(KGb_i,r)$, $i=1,\ldots,n$, which are permuted under the $\Gamma$-action. Since $\Gamma$ is abelian, for every subgroup $\Delta\in\Gamma_b$ the cardinality of the $\Delta$-fixed point sets $\Irr_0(KGb_i, r)^\Delta$, $i=1,\ldots,n$, is independent of $i$, so that 
\begin{equation}\label{eqn Delta 2}
   |\Irr_0(KG\btilde,r)^\Delta| = n\cdot |\Irr_0(KGb,r)^\Delta|\,.
\end{equation}
Similarly, one obtains
\begin{equation}\label{eqn Delta 3}
   |\Irr_0(KH\ctilde,r)^\Delta| = n\cdot |\Irr_0(KHc,r)^\Delta|\,.
\end{equation}
Now (\ref{eqn Delta 2}), (\ref{eqn Delta 3}), and (\ref{eqn Delta 1}) imply that
\begin{equation*}
   |\Irr_0(KGb,r)^\Delta| = |\Irr_0(KHc,r)^\Delta|
\end{equation*}
for every subgroup $\Delta\le \Gamma_b$. But this implies that one has an isomorphism
\begin{equation*}
   \Irr_0(KGb,r) \cong \Irr_0(KHc,r)
\end{equation*}
of $\Gamma_b$-sets. Taking the disjoint union over the values of $r$ modulo $\pm1$, the second statement of Part~(b) follows. The first statement follows from the second noting that the $\Gamma$-stabilizer of $\QQ_p(\chi)$ must be contained in $\Gamma_b$ for every $\chi\in\Irr(KGb)$. 
\end{proof}

In order to prove Part~(c) of Theorem~\ref{thm app}, we need one more proposition and need to introduce some additional notation.
Let $A$ and $B$ be algebras over a field $k$. For a left $A$-module $M$ we denote by $M^\circ:=\Hom_k(M,k)$ the $k$-dual of $A$, viewed as right $A$-module. By $K(A)$ we denote the homotopy category of bounded chain complexes of finitely generated $A$-modules. We identify the category of $(A,B)$-bimodules with the category of left $A\otimes_k B^\circ$-modules in the usual way, where $B^\circ$ denotes the opposite $k$-algebra of $B$. We denote by $K(A,B)$ the homotopy category of bounded chain complexes of finitely generated $(A,B)$-bimodules. If $M$ is an $(A,B)$-bimodule then $M^\circ$ is a $(B,A)$-bimodule. If $X_*$ is a chain complex of $(A,B)$-bimodules then $X_*^\circ$ is a chain complex of $(B,A)$-bimodules. For any integer $i$ and any $(A,B)$-bimodule $M$ we write $M[i]$ for the chain complex with term $M$ in degree $i$ and terms $\{0\}$ in all other degrees.
Recall that for a semisimple $k$-algebra $A$ and any bounded chain complex $X_*$ of finitely generated $A$-modules, one has $X_*\cong H(X_*)$ in $K(A)$, where $H(X_*)\in K(A)$ is the $\ZZ$-graded $A$-module consisting of the homology of $X_*$ with trivial boundary maps.

\begin{proposition}\label{prop homotopy consequence}
Let $k$ be a field, let $A$ and $B$ be semisimple $k$-algebras, let $X_*$ be a bounded chain complex of finitely generated $(A,B)$-bimodules satisfying $X_*\otimes_B X_*^\circ\cong A[0]$ in $K(A,A)$ and $X_*^\circ\otimes_A X_*\cong B[0]$ in $K(B,B)$. If $W$ is a simple $B$-module then $X_*\otimes_B W\cong V[i]$ in $K(A)$ for a simple $A$-module $V$ and an integer $i$. Moreover, $\End_A(V)\cong\End_B(W)$ as $k$-algebras.
\end{proposition}

\begin{proof}
Note that $X_*\otimes_B-\colon K(B)\to K(A)$ is an equivalence. Moreover, for any simple $B$-module $W$, we have isomorphisms
\begin{align*}
  \End_B(W) & \cong \Hom_{K(B)}(W[0],W[0]) 
                        \cong \Hom_{K(A)}(X_*\otimes _B W[0],X_*\otimes_B W[0])\\
  & \cong \Hom_{K(A)}(H(X_*\otimes_B W[0]), H(X_*\otimes_B W[0])) 
      \cong \prod_{i\in\ZZ}\End_A(H_i(X_*\otimes_B W[0]))
\end{align*}
of $k$-algebras. Since $\End_B(W)$ is a division algebra over $k$, there exists a unique $i\in\ZZ$ such that $\End_A(H_{i}(X_*\otimes_B W[0]))$ is a division algebra isomorphic to $\End_B(W)$ and $H_j(X_*\otimes_B W[0])=0$ for all $j\in \ZZ$ with $j\neq i$. Thus, $X_*\otimes_B W[0]\cong H(X_*\otimes_B W[0]) \cong H_i(X_*\otimes_B W[0])\cong V[i]$ in $K(A)$ for $V:=H_i(X_*\otimes_B W[0])\in\lmod{A}$. Since $\End_A(V)$ is a division algebra, $V$ is a simple $A$-module and the result follows.
\end{proof}

\begin{proof} {\em of Theorem~\ref{thm app}(c).}\quad Let $X_*$ be a splendid Rickard equivalence between $\ZZ_p G\btilde$ and $\ZZ_p H\ctilde$, set $\gamma:=\sum_{i\in\ZZ} (-1)^i [X_i]\in T^\Delta(\ZZ_p G\btilde, \ZZ_p H\ctilde)$, and let $\gamma_\calO\in T^\Delta(\calO G \btilde,\calO H\ctilde)$ denote the element arising from $\gamma$ via scalar extension from $\ZZ_p$ to $\calO$. Then $\gamma_\calO$ is a $p$-permutation equivalence between $\calO G\btilde$ and $\calO H\ctilde$. 
Set $\mu_K:=\kappa_{G\times H}(\gamma_\calO)\in R(KG\btilde,KH\ctilde)$, a perfect isometry between $\calO G\btilde$ and $\calO H\ctilde$. Let $\alpha\colon \Irr(KH\ctilde)\myiso\Irr(KG\btilde)$ denote the bijection induced by $\mu_K$.

For fixed $r\in\{1,\ldots,p-1\}$ and $h\in\QQ/\ZZ$, the bijection $\alpha$ restricts to a bijection
\begin{equation}\label{eqn alpha}
   \alpha\colon \Irr_0(KH\ctilde,r,h)\myiso\Irr_0(KG\btilde,r,h)
\end{equation}
by Theorem~\cite[Th\'eor\`eme~1.5(2)]{Broue1990} and Proposition~\ref{prop homotopy consequence} applied to $k=\QQ_p$, $A=\QQ_p G\btilde$, $B:=\QQ_p H\ctilde$, and the chain complex $K\otimes_{\ZZ_p} X_*$.

Since $\mu_K$ is the scalar extension from $\QQ_p$ to $K$ of $\kappa_{G\times H}(\gamma)$, the bijection $\alpha$ in (\ref{eqn alpha}) is $\Gamma$-invariant. Thus, for every $\Delta\le\Gamma_b$ one has
\begin{equation*}
   |\Irr_0(KG\btilde,r,h)^\Delta| = |\Irr_0(KH\ctilde,r,h)^\Delta|\,.
\end{equation*}
Note that if $\chi$ and $\chi'$ in $\Irr(KG\btilde)$ are $\Gamma$-conjugate then $h(\chi)=h(\chi')$. With this and the same arguments in the proof of Part~(b), one obtains
\begin{equation*}
      |\Irr_0(KGb,r,h)^\Delta| = |\Irr_0(KHc,r,h)^\Delta|\,.
\end{equation*}
This implies that the $\Gamma_b$-sets $\Irr_0(KGb,r,h)$ and $\Irr_0(KHc,r,h)$ are isomorphic. Taking the disjoint union over the various values of $r$ and $h$, one obtains the second statement of Part~(c). The firest statement follows again as in the proof of Part~(b).
\end{proof}


\end{document}